\documentclass[11pt]{amsart}

\usepackage{amsaddr}
\usepackage{caption}
\usepackage{float}
\usepackage{graphicx}
\usepackage{geometry} 
\usepackage{setspace}
\usepackage{subcaption}
\usepackage{tabu}

\geometry{a4paper} 

\newcommand{\vx}{\vct{x}}

\newcommand{\vvu}{\vvct{u}}

\newcommand{\vvv}{\vvct{v}}

\newcommand{\valpha}{\boldsymbol\alpha}

\newcommand{\Pih}{\Pi_{h}}
\newcommand{\Lc}{\mathcal{L}}
\newcommand{\Lh}{\mathcal{L}_h}
\newcommand{\Lhs}{\mathcal{L}_{h*}}

\newcommand{\vct}[1]{\mathbf{#1}}
\newcommand{\vvct}[1]{\underline{#1}}
\newcommand{\ten}[1]{\mathbf{#1}}

\DeclareMathOperator*{\esssup}{ess\,sup}

\newtheorem{thm}{Theorem}[section]
\newtheorem{lem}[thm]{Lemma}
\newtheorem{cor}[thm]{Corollary}
\newtheorem{rem}[thm]{Remark}

\title[Pre and post-processing for the FEM for linear wave problems]{Doubling the convergence rate by pre- and post-processing the finite element approximation for linear wave problems}
\author{S. Geevers$^1$}
\address{1. Department of Mathematics, University of Vienna, Austria (e-mail: sjoerd.geevers@univie.ac.at)}
\thanks{*The author has been funded by the Austrian Science Fund (FWF) through the project F 65 `Taming Complexity in Partial Differential Systems}

\begin{document}

\maketitle

\begin{abstract}
In this paper, a novel pre- and post-processing algorithm is presented that can double the convergence rate of finite element approximations for linear wave problems. In particular, it is shown that a $q$-step pre- and post-processing algorithm can improve the convergence rate of the finite element approximation from order $p+1$ to order $p+1+q$ in the $L^2$-norm and from order $p$ to order $p+q$ in the energy norm, in both cases up to a maximum of order $2p$, with $p$ the polynomial degree of the finite element space. The $q$-step pre- and post-processing algorithms only need to be applied \emph{once} and require solving at most $q$ linear systems of equations. 

The biggest advantage of the proposed method compared to other post-processing methods is that it does not suffer from convergence rate loss when using unstructured meshes. Other advantages are that this new  pre- and post-processing method is straightforward to implement, incorporates boundary conditions naturally, and does not lose accuracy near boundaries or strong inhomogeneities in the domain. Numerical examples illustrate the improved accuracy and higher convergence rates when using this method. In particular, they confirm that $2p$-order convergence rates in the energy norm are obtained, even when using unstructured meshes or when solving problems involving heterogeneous materials and curved boundaries.
\end{abstract}

\section{Introduction}
When solving wave propagation problems, the finite element method offers a good alternative to the popular finite difference method when the effects of the geometry, e.g.  the geometry of objects in scattering problems or the topography in seismic models, need to be accurately modelled. The error of the finite element solution is at most of order $h^p$ in the energy norm and order $h^{p+1}$ in the $L^2$-norm, where $h$ denotes the mesh resolution and $p$ the polynomial degree of the finite element space. We show that, by carefully discretising the initial values and by post-processing only the final solution, we can improve the convergence rates in both norms up to order $2p$ . 

Post-processing methods for the finite element method have been known for several decades and have been applied to numerous problems, including elliptic problems \cite{bramble77,thomee77}, parabolic problems \cite{thomee80}, and first-order hyperbolic problems \cite{bales93,cockburn03}. For an historic overview, see, for example, \cite{cockburn03} and the references therein. These methods exploit the fact that the numerical error converges faster in a negative-order Sobolev norm than in the $L^2$-norm. Typically, the post-processed solution is obtained by convolving the finite element solution with a suitable kernel. The error of the post-processed solution can then be bounded by the numerical error and difference quotients of the numerical error in a negative-order Sobolev norm. 

In this paper, we show that the numerical error for the linear wave equation converges with rate $\min(p+1+m,2p)$ in the $(-m)$th-order Sobolev norm. Difference quotients of the numerical error converge with the same rate in case translation-invariant meshes are used. In case of unstructured meshes, however, the super-convergence rate of the difference quotients, and therefore the post-processed solution, is (partially) lost \cite{curtis07,mirzaee13,li16}.  We therefore present an alternative post-processing algorithm that fully preserves the super-convergence rate when using unstructured meshes. 

The accuracy of this new approach relies on (adapted) negative-order Sobolev norms of \emph{time derivatives} of the numerical error instead of difference quotients. These time derivatives also play an important role in the error analysis of finite element methods for nonlinear problems \cite{meng17}. They maintain the full super-convergence rate when using unstructured meshes if the initial values are carefully discretised. To improve the convergence rate by $q$ orders up to a maximum of order $2p$, the proposed algorithm requires solving at most $q$ elliptic problems for discretising the initial value and $q$ elliptic problems for post-processing the final solution. These elliptic problems are solved with a finite element method using the same mesh as for the time-stepping. For post-processing, elements of degree $p+q$ are used instead of degree $p$. The resulting linear systems of equations can be solved with a direct solver or with an iterative solver and for the latter, good initial guesses can be obtained from the unprocessed initial and final values.

The biggest advantage of this new method is that the super-convergence rates are fully maintained when using unstructured meshes. Other advantages are that the method is easy to implement, naturally incorporates the boundary conditions, and suffers no accuracy loss near boundaries or material interfaces, while post-processing with a convolution requires special kernels near the boundary \cite{bales93,ryan03,slingerland11,ryan15}. A disadvantage of the proposed method is that it requires a global operator, while the convolution with a Kernel is typically a local operator. This makes the new method unsuitable for problems that require an accurate reconstruction of the wave field throughout the entire time interval.

The paper is constructed as follows: first, we present the scalar wave equation and the corresponding classical finite element method in Section \ref{sec:model}. In Section \ref{sec:pp}, we then introduce the new pre- and post-processing algorithm. Super-convergence rates of the classical finite element method in negative-order Sobolev norms and of the post-processed solution in the energy- and $L^2$-norm are derived in Section \ref{sec:error}. In Section \ref{sec:tests}, we then demonstrate this super-convergence of the post-processed solution for numerous test cases, including cases with heterogeneous materials, unstructured meshes, and curved boundaries. Finally, we summarise our main conclusions in Section \ref{sec:conclusion}.

\section{Wave equation and finite element discretisation}
\label{sec:model}
Let $\Omega\subset\mathbb{R}^d$ be an open bounded $d$-dimensional domain with Lipschitz boundary $\partial\Omega$. We consider the scalar wave equation given by
\begin{subequations}
\begin{align}
\partial_t^2u &= \rho^{-1}\nabla\cdot c\nabla u + f &&\text{in }\Omega\times(0,T) \\
u&= 0 && \text{on }\partial\Omega\times (0,T), \\
u|_{t=0}&= u_0 &&\text{in }\Omega, \\
\partial_tu|_{t=0}&=v_0 &&\text{in }\Omega,
\end{align}
\label{eq:wave}%
\end{subequations} 
where $(0,T)$ is the time domain, $u:\Omega\times(0,T)\rightarrow\mathbb{R}$ is the wave field, $\nabla$ is the gradient operator, $u_0,v_0:\Omega\rightarrow\mathbb{R}$ are the initial wave and velocity field, $f:\Omega\times(0,T)\rightarrow\mathbb{R}$ is the source term, and $\rho,c:\Omega\rightarrow\mathbb{R}^+$ are strictly positive scalar fields that satisfy
\begin{align}
c_0\leq c\leq c_1 \qquad \text{and} \qquad \rho_0\leq\rho\leq\rho_1
\label{eq:bounds}
\end{align}
for some positive constants $\rho_0$, $\rho_1$, $c_0$, and $c_1$.

For the weak formulation, let $u_0\in H_0^1(\Omega)$, $v_0\in L^2(\Omega)$, and $f\in L^2(0,T;L^2(\Omega))$, where $L^2(\Omega)$ denotes the standard Lebesque space of square integrable functions on $\Omega$, $H^1_0(\Omega)$ denotes the standard Sobolev space of functions on $\Omega$ that have a zero trace on the boundary $\partial\Omega$ and have square-integrable weak derivatives, and $L^2(0,T;U)$, with $U$ a Banach space, is the standard Bochner space of functions $f:(0,T)\rightarrow U$, such that $\|f\|_U$ is square integrable on $(0,T)$. The weak formulation of \eqref{eq:wave} can then be formulated as finding $u\in L^{2}\big(0,T;H^1_0(\Omega)\big)$, with $\partial_tu\in L^2\big(0,T;L^2(\Omega)\big)$, $\partial_t(\rho\partial_tu)\in L^2\big(0,T;H^{-1}(\Omega)\big)$, such that $u|_{t=0}=u_0$, $\partial_tu|_{t=0}=v_0$, and 
\begin{align}
\label{eq:WF}
\langle \partial_t(\rho\partial_tu), w\rangle + a(u,w) &= (f,w)_{\rho} &&\text{for all }w\in H_0^1(\Omega), \text{a.e. }t\in(0,T),
\end{align}
where $H^{-1}(\Omega)$ denotes the dual space of $H^1_0(\Omega)$, $\langle \cdot,\cdot\rangle$ denotes the pairing between $H^{-1}(\Omega)$ and $H^1_0(\Omega)$, $(\cdot,\cdot)_{\rho}$ denotes the weighted $L^2(\Omega)$-product
\begin{align*}
(u,w)_{\rho} &= \int_{\Omega} \rho uw \;dx
\end{align*}
and $a(\cdot,\cdot)$ denotes the elliptic operator
\begin{align*}
a(u,w) &= \int_{\Omega} c\nabla u\cdot\nabla w \;dx.
\end{align*}
It can be shown, in a way analogous to \cite[Chapter 3, Theorem 8.1]{lions72}, that this problem is well posed and has a unique solution.

The weak formulation can be solved with the classical finite element method. Let $\mathcal{T}_h$ be a tessellation of $\Omega$ into straight and/or curved simplicial elements that all have a radius smaller than $h$. The degree-$p$ finite element space $U_h$ can then be constructed as follows:
\begin{align*}
U_h = U(\mathcal{T}_h,\hat U)&:= \{ u\in H^1_0(\Omega) \;|\; u\circ\phi_e\in\hat{U} \text{ for all }e\in\mathcal{T}_h\},
\end{align*}
where $\phi_e:\hat{e}\rightarrow e$ denotes the reference-to-physical element mapping and $\hat U:=\mathcal{P}_p(\hat e)$ denotes the reference element space, with $\mathcal{P}_p(\hat e)$ the space of polynomials on $\hat e$ of degree $p$ or less. The element mappings $\phi_e$ are usually affine mapping resulting in straight elements, but may also be higher-degree polynomial mappings resulting in curved elements that can better approximate domains with curved boundaries. The finite element method approximates the wave field by a discrete wave field $u_h:[0,T]:\rightarrow U_h$ that satisfies the initial conditions $u_h|_{t=0}=u_{0,h}$ and $\partial_tu_h|_{t=0}=v_{0,h}$ and that satisfies
\begin{align}
\label{eq:FEM}
(\partial_t^2u_h,w)_{\rho} + a(u_h,w) &= (f_h,w)_{\rho} &&\text{for all }w\in U_h, \text{a.e. }t\in(0,T).
\end{align}
The initial values $u_{0,h},v_{0,h}\in U_h$ and the discrete source term $f_h\in L^2\big(0,T;L^2(\Omega)\cap U_h\big)$ are projections of $u_0$, $v_0$, and $f$, onto the discrete space. We define $f_h := \Pih f$, where $\Pih$ is the weighted $L^2$-projection operator given by
\begin{align*}
(\Pih u,w)_{\rho} = (u,w)_{\rho} &&\text{for all }w\in U_h.
\end{align*} 
Typically, a (weighted) $L^2$-projection is also used to discretise the initial solutions, although other types of projections are introduced in the next section in order to obtain higher convergence rates.

\section{Pre- and post-processing}
\label{sec:pp}
To present the pre- and post-processing algorithm, we first define the differential operator $\Lc$ by
\begin{align*}
\Lc :=-\rho^{-1}\nabla\cdot c\nabla, 
\end{align*}
where $\Lc u\in L^2(\Omega)$ is well-defined when $u\in H^1(\Omega)$ and $\nabla\cdot c\nabla u\in L^2(\Omega)$. The inverse operator $\Lc^{-1}:L^2(\Omega)\rightarrow H_0^1(\Omega)$ is defined such that 
\begin{align*}
a(\Lc^{-1}f,w) &= (f,w)_{\rho} &&\text{for all }w\in H_0^1(\Omega).
\end{align*}
Note that $u=\Lc^{-1}f$ is the weak solution of the elliptic problem 
\begin{subequations}
\begin{align}
-\rho^{-1}\nabla\cdot c\nabla u &= f &&\text{in }\Omega, \\
u &= 0 &&\text{on }\partial\Omega.
\end{align}
\label{eq:ell}%
\end{subequations}
The existence of a unique weak solution follows from the fact that $a$ is coercive, which in turn follows from the Poincar\'e inequality and the boundedness of $c$. Also note that, for any $f\in L^2(\Omega)$, we have $\Lc\Lc^{-1}f=f$ and, for any $u\in H_0^1(\Omega)$ with $\Lc u\in L^2(\Omega)$, we have $\Lc^{-1}\Lc u=u$. We will also often use the notation $\Lc^{\alpha}$ and $\Lc^{-\alpha}$ to denote the operator $\Lc$ and $\Lc^{-1}$ applied $\alpha$ times, respectively, , e.g. $\Lc^2u=\Lc(\Lc u)$, $\Lc^3=\Lc(\Lc(\Lc u))$, etc.

The discrete version of $\Lc$, denoted by $\Lh:U_h\rightarrow U_h$, is defined such that 
\begin{align*}
(\Lh u_h,w)_{\rho} &= a(u_h,w), && \text{for all }w\in U_h,
\end{align*}
and the inverse operator $\Lh^{-1}:U_h\rightarrow U_h$ such that
\begin{align*}
a(\Lh^{-1} f_h,w) &= (f_h,w)_{\rho}, && \text{for all }w\in U_h,
\end{align*}
Note that $u_h=\Lh^{-1} f_h$ is the standard finite element approximation of the solution $u$ to \eqref{eq:ell}.

It can be shown that, if the weak solution $u$ of \eqref{eq:wave} satisfies $\partial_t^2u\in L^2\big(0,T;L^2(\Omega)\big)$ and if $f\in L^2\big(0,T;L^2(\Omega)\big)$, then $u$ satisfies \eqref{eq:wave} almost everywhere and we can write
\begin{align}
\label{eq:wave1a}
\partial_t^2u + \Lc u &= f ,
\end{align}
while for the finite element approximation, we can write
\begin{align}
\label{eq:wave1b}
\partial_t^2u_h + \Lh u_h &= f_h .
\end{align}

When $u$ and $f$ are sufficiently regular, we can differentiate \eqref{eq:wave1a} and \eqref{eq:wave1b} with respect to time and obtain equalities of the form
\begin{align*}
\partial_t^{k+2}u + \Lc \partial_t^k u &= \partial_t^k f, \\
\partial_t^{k+2}u_h + \Lh \partial_t^k u_h &= \partial_t^k f_h,
\end{align*}
for $k\geq 0$. By reordering the terms, we can obtain equalities of the form
\begin{subequations}
\label{eq:wave2a}
\begin{align}
\partial_t^{k+2}u &= -\Lc \partial_t^ku + \partial_t^k f = r_{k}^{(1)}(\partial_t^k u),  \label{eq:wave2aa} \\
\partial_t^{k}u &= \Lc^{-1}(-\partial_t^{k+2}u + \partial_t^k f) = r_k^{(-1)}(\partial_t^{k+2} u), 
\end{align}
\end{subequations}
and
\begin{subequations}
\label{eq:wave2b}
\begin{align}
\partial_t^{k+2}u_h &= -\Lh \partial_t^ku_h + \partial_t^k f_h = r_{k,h}^{(1)}(\partial_t^k u_h), \\
\partial_t^{k}u_h &= \Lh^{-1}(-\partial_t^{k+2}u_h + \partial_t^k f_h) = r_{k,h}^{(-1)}(\partial_t^{k+2} u_h), 
\end{align}
\end{subequations}
for $k\geq 0$, where $r_{k}$, $r_k^{-1}$, $r_{k,h}$, and $r_{k,h}^{-1}$ are mappings defined as follows:
\begin{align*}
r_{k}^{(1)}(w) &:= -\Lc w + \partial_t^k f,  & r_k^{(-1)}(w) &:= \Lc^{-1}(-w + \partial_t^k f), \\
r_{k,h}^{(1)}(w) &:= -\Lh w + \partial_t^k f_h,  & r_{k,h}^{(-1)}(w) &:= \Lh^{-1}(-w + \partial_t^k f_h), 
\end{align*}
for $k\geq 0$. By applying \eqref{eq:wave2a} and \eqref{eq:wave2b} multiple times, we can obtain the following relations:
\begin{subequations}
\label{eq:wave3a}%
\begin{align}
\partial_t^{k+2\alpha}u &= (r_{k+2\alpha-2}^{(1)} \circ r_{k+2\alpha-4}^{(1)}\circ r_{k+2\alpha-6}^{(1)}\cdots \circ r_{k}^{(1)})(\partial_t^ku) =: r_{k}^{(\alpha)}(\partial_t^ku), \\
\partial_t^{k}u &= \big( r_k^{(-1)} \circ r_{k+2}^{(-1)} \circ r_{k+4}^{(-1)} \cdots \circ r_{k+2\alpha-2}^{(-1)} \big)(\partial_t^{k+2\alpha}u) =: r_{k}^{(-\alpha)}(\partial_t^{k+2\alpha}u),
\end{align}
\end{subequations}
and
\begin{subequations}
\label{eq:wave3b}%
\begin{align}
\partial_t^{k+2\alpha}u_h &= (r_{k+2\alpha-2,h}^{(1)}\circ r_{k+2\alpha-4,h}^{(1)}\circ r_{k+2\alpha-6,h}^{(1)}\cdots \circ r_{k,h}^{(1)} )(\partial_t^ku_h) =: r_{k,h}^{(\alpha)}(\partial_t^ku_h), \\
\partial_t^{k}u_h &= \big( r_{k,h}^{(-1)}\circ r_{k+2,h}^{(-1)}\circ r_{k+4,h}^{(-1)}\cdots \circ r_{k+2\alpha-2,h}^{(-1)} \big)(\partial_t^{k+2\alpha}u_h) =: r_{k,h}^{(-\alpha)}(\partial_t^{k+2\alpha}u_h),
\end{align}
\end{subequations}
for $k\geq 0, \alpha \geq 0$, with $r_{k}^{(0)}$ and $r_{k,h}^{(0)}$ the identity operators. Note that
\begin{align*}
\big(r_{k}^{(\alpha)}\circ r_k^{(-\alpha)}\big) (w) &= w, &&w\in L^2(\Omega), \\
\big(r_{k,h}^{(\alpha)}\circ r_{k,h}^{(-\alpha)}\big) (w) &= w, &&w\in U_h.
\end{align*}

To improve the convergence rate of the finite element approximation by $q\geq 1$ orders, assume $u$ and $f$ are sufficiently regular and discretise the initial values as follows:
\begin{subequations}
\label{eq:u0h}
\begin{align}
u_{0,h} &= \big(r^{(-\alpha)}_{0,h} \circ \Pi_h r^{(\alpha)}_0\big) (u_0), &&\alpha = \lceil q/2\rceil, \\
v_{0,h} &=  \big(r^{(-\beta)}_{1,h} \circ \Pi_h  r^{(\beta)}_1\big) (v_0), &&\beta = \lfloor q/2\rfloor.
\end{align}
\end{subequations}
To obtain an improved approximation $u_{h*}$ and $v_{h*}$ of the wave field $u$ and velocity $\partial_tu$ at time $T$, respectively, we compute
\begin{subequations}
\label{eq:ush}
\begin{align}
u_{h*}(T) &= \big(r^{(-\alpha)}_0 \circ r^{(\alpha)}_{0,h}\big) \big(u_{h}(T)\big) , &&\alpha = \lceil q/2\rceil, \\
v_{h*}(T) &= \big(r^{(-\beta)}_1 \circ r^{(\beta)}_{1,h}\big) \big(\partial_tu_{h}(T)\big), &&\beta = \lfloor q/2\rfloor.
\end{align} 
\end{subequations}
Values of $\alpha$ and $\beta$ for $q\leq 10$ are listed in Table \ref{tab:abck}. In case of sufficient regularity, we can then obtain error estimates of the form
\begin{subequations}
\label{eq:errb1}
\begin{align*}
\|\partial_tu(T)-v_{h*}(T)\|_0 + \|u(T)-u_{h*}(T)\|_1 &\leq Ch^{p+\min(p,q)}, \\
\|u(T)-u_{h*}(T)\|_0 &\leq Ch^{p+\min(p,q+1)}, 
\end{align*}
\end{subequations}
(see Corollary \ref{cor:conv3}), where $p$ denotes the degree of the polynomial approximation and $C$ is some positive constant that does not depend on the mesh resolution $h$. This implies, for example, that a convergence rate of order $2p$ in the energy norm can be obtained when choosing $q=p$.

\begin{table}[h]
\caption{Number of processing steps $\alpha$ and $\beta$ at initial and final time to improve the convergence rate by $q$ orders.}
\label{tab:abck}
\begin{center}
{\tabulinesep=0.5mm
\begin{tabu}{c | ccccc ccccc}
$q$			& 1 & 2 & 3 & 4 & 5 & 6 & 7 & 8 & 9 & 10 \\ \hline
$\alpha$		& 1 & 1 & 2 & 2 & 3 & 3 & 4 & 4 & 5 & 5 \\
$\beta$		& 0 & 1 & 1 & 2 & 2 & 3 & 3 & 4 & 4 & 5
\end{tabu}}
\end{center}
\end{table}

From \eqref{eq:wave3a} and \eqref{eq:wave3b}, it follows that the choice for the initial conditions is equivalent to setting
\begin{align*}
\partial_t^qu_h|_{t=0} &= \partial_t^qu|_{t=0}, \qquad \partial_t^{q+1}u_h|_{t=0} = \partial_t^{q+1}u|_{t=0}.
\end{align*}
It also follows from \eqref{eq:wave3a} and \eqref{eq:wave3b}, that the post-processed solution at time $T$ is equal to the exact solution when
\begin{align*}
\partial_t^qu(T) &= \partial_t^qu_h(T), \qquad \partial_t^{q+1}u(T) = \partial_t^{q+1}u_h(T).
\end{align*}

In case of no source term, so $f=0$, the pre- and post-processed approximations simplify to
\begin{align*}
u_{0,h} &= \Lh^{-\alpha}\Pih\Lc^{\alpha}u_0, & u_{h*}(T) &=\Lc^{-\alpha}\Pih\Lh^{\alpha}u_h(T), \\
v_{0,h} &= \Lh^{-\beta}\Pih\Lc^{\beta}v_0, & v_{h*}(T) &=\Lc^{-\beta}\Pih\Lh^{\beta}\partial_tu_h(T).
\end{align*}

To discretise the initial values $u_0$ and $v_0$, we need to respectively apply $\alpha$ and $\beta$ times the operator $\Lh^{-1}$, which means we need to respectively solve $\alpha$ and $\beta$ elliptic problems of the form in \eqref{eq:ell} numerically using the finite element method. The mesh and approximation space used to solve these elliptic problems are the same as those used for the time-stepping. To obtain the improved wave field $u_{h*}$ or velocity field $v_{h*}$ at the final time, we need to respectively apply $\alpha$ or $\beta$ times the operator  $\Lc^{-1}$, which requires solving elliptic problems of the form in \eqref{eq:ell} $\alpha$ or $\beta$ times exactly. Obtaining the exact solution, however, is usually not possible. Instead, we can approximate these elliptic problems with a degree-$(p+q)$ finite element method in order to maintain the improved convergence rates. 

In the end, both pre- and post-processing therefore require solving a set of at most $\alpha+\beta=q$ linear systems. These linear systems can be solved by a direct method or with an iterative method. When using an iterative method, good first approximations can be obtained from the unprocessed initial and final value, as shown in Section \ref{sec:tests}.

The pre- and post-processing algorithms presented here apply to wave problems with zero Dirichlet boundary conditions, but can also be applied to problems with Neumann boundary conditions or periodic domains when we replace $H_0^1(\Omega)$ by $H^1(\Omega)\cap\{1\}^{\perp}$, where $\{1\}^{\perp} := \{u\in L^2(\Omega) \;|\; (u,1)_{\rho} = 0\}$. Furthermore, the algorithms can also be applied to problems with non-zero boundary conditions, since the non-zero part can be absorbed in the source term $f$ and the initial conditions. In particular, let $\tilde{u}$ be any smooth function that satisfies the non-zero boundary condition and set $\tilde{f}:=\partial_t^2\tilde{u} - \rho^{-1}\nabla\cdot c\nabla\tilde{u}$. We can then write $u=\hat{u}+\tilde{u}$, where $\hat{u}$ is the solution of the wave equation \eqref{eq:wave} with zero boundary conditions, source term $\hat{f}=f-\tilde f$ and initial conditions $\hat{u}_0=u_0-\tilde{u}|_{t=0}$ and $\hat{v}_0=v_0-\partial_t\tilde{u}|_{t=0}$.

\section{Error analysis}
\label{sec:error}
In this section, we analyse the accuracy of the classical finite element method in negative-order Solobev norms and the accuracy of the pre- and post-processing algorithms in the $L^2$- and energy norm. We only consider the semi-discrete case, so not the time discretisation. The outline of the error analysis is as follows: we first obtain error estimates for the classical finite element approximation of the form
\begin{align*}
\|\partial_tu - \partial_tu_h\|_{-m*}+ \|u-u_h\|_{1-m*} &\leq Ch^{p+\min(p,m)}
\end{align*}
for $m\geq 0$  (see Theorem \ref{thm:conv1}), where $\|\cdot\|_{0*}:=\|\cdot\|_0$ and $\|\cdot\|_{1*}:=\|\cdot\|_1$ denote the standard $L^2$- and $H^1$-norm, respectively, and $\|\cdot\|_{-m*}$, for $m>0$, denotes the adapted negative-order Sobolev-norm defined in \eqref{eq:negnorm}. The theory is similar to that of \cite{douglas78}, except that in \cite{douglas78}, they require a very complex and non-standard discretisation of the initial values, while we prove in Theorem \ref{thm:conv1} that the estimates also hold when the initial values are discretised using a standard weighted $L^2$-projection.

We then show (see proof Theorem \ref{thm:conv2}) that $\partial_t^qu$ and $\partial_t^qu_h$ can also be written as a solution and finite element approximation of the wave equation, respectively, and that by discretising $u_{0,h}$ and $v_{0,h}$ as in \eqref{eq:u0h}, the error is bounded by
\begin{align*}
\|\partial_t^{q+1}u - \partial_t^{q+1}u_h\|_{-m*}+ \|\partial_t^qu-\partial_t^qu_h\|_{1-m*} &\leq Ch^{p+\min(p,m)}
\end{align*}
for $m\geq 0$.

Finally, we show that 
\begin{align*}
& \|\partial_tu -v_{h*}\|_{-m*}+ \|u-u_{h*}\|_{1-m*} = \\
& \qquad \|\partial_t^{q+1}u - \partial_t^{q+1}u_h\|_{-(q+m)*}+ \|\partial_t^{q}u-\partial_t^{q}u_h\|_{1-(q+m)*}
\end{align*}
for $m\geq 0$ (see Theorem \ref{thm:conv2}), where $u_{h*}$ and $v_{h*}$ are post-processed solutions, defined as in \eqref{eq:ush}. By choosing $m=1$ or $m=0$, we then obtain bounds of the form
\begin{align*}
\|\partial_tu-v_{h*}\|_0 + \|u-u_{h*}\|_1 &\leq Ch^{p+\min(p,q)}, \\
\|u-u_{h*}\|_0 &\leq Ch^{p+\min(p,q+1)}.
\end{align*}

The analysis presented in this section relies on certain regularity assumptions that typically only hold when the boundary of the domain is sufficiently smooth. We therefore do not restrict our analysis to polyhedral domains, but assume that $\Omega$ has a piecewise smooth boundary. An accurate finite element approximation will then require the use of curved elements and the leading constants in the error estimates will then also depend on the shape-regularity of these elements. We therefore define the shape-regularity of the mesh, denoted by $\gamma$, as follows:
\begin{align}
\label{eq:defmeshreg}
\gamma &:= \max_{e\in\mathcal{T}_h}\gamma_e :=  \max_{e\in\mathcal{T}_h} \left( \sup_{\vx\in e} h_e\|\nabla\phi_e^{-1}(\vx)\| + \sum_{2\leq|\alpha|\leq p+1} \sup_{\hat{\vx}\in \hat e} h_{e}^{-|\alpha|} \|D^{\alpha}\phi_e(\hat{\vx}) \| \right),
\end{align}
where $h_{e}$ is the diameter of $e$, $\|\cdot\|$ denotes the Euclidean norm, $D^{\alpha}:=\partial_1^{\alpha_1}\partial_2^{\alpha_2}\dots\partial_d^{\alpha_d}$ is a partial differential operator, $\partial_i$ is the partial derivative with respect to Cartesian coordinate $x_i$, and $|\valpha|:=\alpha_1+\alpha_2+\dots+\alpha_d$ is the order of the operator.

In case of a straight element, $\gamma_e$ is proportional to the commonly used ratio $h_e/\rho_e$, where $\rho_e$ denotes the diameter of the inscribed sphere of $e$. For curved elements, the regularity constant $\gamma$ remains uniformly bounded for a sequence of meshes when using a suitable parametrisation, such as the one described in \cite{lenoir86}. We present a simpler alternative to this algorithm in Section \ref{sec:testCircle}.

To simplify the error analysis, we will always assume that the mesh covers the domain exactly, so $\bigcup_{e\in\mathcal{T}_h}\overline{e} = \overline{\Omega}$. Furthermore, for the readability of the analysis, we will always let $C$ denote a constant that may depend on the domain $\Omega$, the regularity of the mesh $\gamma$, the spatial parameters $\rho$ and $c$, and the polynomial degree $p$, but does not depend on the mesh resolution $h$, time interval $(0,T)$, or the functions that appear in the inequality. We will also always assume that $h\leq h_0$, for some sufficiently small $h_0>0$ that does not depend on the time interval $(0,T)$ or the functions that appear in the inequality.

Let $H^k(\Omega)$, for $k\geq 1$, denote the standard Sobolev space of functions on $\Omega$ with square-integrable $k$th-order derivatives equipped with norm
\begin{align*}
\|u\|_{k} &:= \sum_{|\valpha|\leq k} \|D^{\valpha}u \|_{0},
\end{align*}
where $\|\cdot\|_0$ denotes the standard $L^2(\Omega)$-norm. Also, let $H^k_0(\Omega)$ denote the space of functions $u\in H^k(\Omega)$ such that the trace of the derivatives is zero on the boundary: $D^{\valpha}u|_{\partial \Omega}=0$, for all $|\valpha|\leq k-1$. We then define the following negative-order Sobolev norms for functions in $L^2(\Omega)$:
\begin{align*}
\|u\|_{-k} &:= \sup_{w\in H_0^{k}(\Omega) \setminus \{0\}}\frac{(u,w)}{\|w\|_{k}}. 
\end{align*}
We also introduce an adapted version of these negative order norms:
\begin{align}
\label{eq:negnorm}
\|u\|_{-k*} &:= \begin{cases}
\| \Lc^{-\alpha}u\|_0, &k=2\alpha, \\
\| \Lc^{-\alpha}u\|_1, &k=2\alpha-1.
\end{cases}
\end{align}
It is proven in Section \ref{sec:nnorm}, that, if $\rho$ and $c$ are sufficiently smooth, then
\begin{align*}
\|u\|_{-k} &\leq C\|u\|_{-k*}.
\end{align*}

Now let $U$ be a Banach space and let $L^{q}(0,T;U))$ denote the Bochner space of functions $f:(0,T)\rightarrow U$ such that $\|f(t)\|_U$ is in $L^q(0,T)$. We equip the space with norm
\begin{align*}
\|u\|_{q,U} &:= \begin{cases}
\left(\int_{0}^T \|u(t)\|_{U}^q \;dt \right)^{1/q}, & 1\leq q<\infty, \\
\esssup_{t\in (0,T)} \|u(t)\|_{U}, & q=\infty.
\end{cases} 
\end{align*}
We will use $\|\cdot\|_{\infty,k}$ as a short-hand notation for the $L^{\infty}(0,T;H^k(\Omega))$-norm and define $\|\cdot\|_{\infty,-k*}$, for $k\geq 1$, as follows:
\begin{align*}
\|u\|_{\infty,-k*} &:= \esssup_{t\in (0,T)} \|u(t)\|_{-k*}, &&u\in L^{\infty}(0,T;L^2(\Omega)).
\end{align*}

We will often make a regularity assumption of the following type: if $f\in H^k(\Omega)$, then $\Lc^{-1}f\in H^{k+2}(\Omega)$ and
\begin{align}
\|\mathcal{L}^{-1}f\|_{k+2} &\leq C\|f\|_{k} &&\text{ for all }k\leq K.
\label{eq:reg}
\end{align}
Such an assumption certainly holds when $\partial\Omega\in \mathcal{C}^{K+2}$, $c\in \mathcal{C}^{K+1}(\overline\Omega)$, and $\rho\in\mathcal{C}^K(\overline\Omega)$. 

We can now prove super-convergence for the finite element method combined with pre- and post-processing by proving the following lemma's and theorems.

\begin{lem}[error equation]
\label{lem:err}
Let $u$ be the solution of \eqref{eq:WF} and let $u_h$ be the solution of \eqref{eq:FEM} with $f_h:=\Pih f$. Assume $\partial_t^2u \in L^2\big(0,T;L^2(\Omega)\big)$. Then the discrete error $e_h:=\Pih u-u_h$ satisfies
\begin{align}
\label{eq:err}
(\partial_t^2e_h,w)_{\rho} + a(e_h,w) &= - a(\epsilon_h,w)
\end{align}
for all $w\in U_h$ and almost every $t\in(0,T)$, where $\epsilon_h:=u-\Pih u$ is the projection error.
\end{lem}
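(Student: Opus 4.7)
The plan is to derive the error equation by taking the difference between the continuous and discrete weak formulations, restricted to test functions in $U_h$, and then peeling off the projection part of the error.

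First I would note that since $\partial_t^2 u \in L^2(0,T;L^2(\Omega))$, equation \eqref{eq:wave1a} holds pointwise a.e., so pairing with any $w \in H_0^1(\Omega)$ and using the definition of $\Lc$ via $a(\cdot,\cdot)$ gives
\begin{align*}
(\partial_t^2 u, w)_{\rho} + a(u,w) &= (f,w)_{\rho}, &&\text{for all }w\in H_0^1(\Omega),\ \text{a.e. }t\in(0,T).
\end{align*}
Restricting the test functions to $w\in U_h\subset H_0^1(\Omega)$ and subtracting \eqref{eq:FEM}, the right-hand side becomes $(f-f_h,w)_{\rho}$, which vanishes for $w\in U_h$ by the definition of $f_h=\Pi_h f$. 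Hence the Galerkin orthogonality-type identity
\begin{align*}
(\partial_t^2 (u-u_h),w)_{\rho} + a(u-u_h,w) &= 0, &&\text{for all }w\in U_h.
\end{align*}

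Next I would split $u-u_h = \epsilon_h + e_h$ with $\epsilon_h = u-\Pi_h u$ and $e_h = \Pi_h u - u_h$, substitute, and move the $\epsilon_h$-part of the bilinear form to the right-hand side. The key remaining step is to argue that the term $(\partial_t^2 \epsilon_h, w)_{\rho}$ vanishes for all $w\in U_h$. Since $\Pi_h$ is a time-independent bounded linear operator, differentiation in time commutes with it, so $\partial_t^2 \epsilon_h = \partial_t^2 u - \Pi_h \partial_t^2 u$, which is itself a projection error. By the defining orthogonality property of the weighted $L^2$-projection, $(\partial_t^2 u - \Pi_h \partial_t^2 u, w)_{\rho} = 0$ for every $w\in U_h$, and this removes the unwanted term.

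Combining these observations directly gives
\begin{align*}
(\partial_t^2 e_h, w)_{\rho} + a(e_h,w) &= -a(\epsilon_h,w), &&\text{for all }w\in U_h,
\end{align*}
which is \eqref{eq:err}. The one delicate point is the commutation of $\Pi_h$ with $\partial_t^2$, which requires $\partial_t^2 u\in L^2(0,T;L^2(\Omega))$ to make sense of $\Pi_h \partial_t^2 u$ as an element of $U_h$ for a.e.\ $t$; this is precisely what the regularity assumption of the lemma guarantees. Everything else is a routine subtraction of weak formulations, so I do not expect a serious obstacle.
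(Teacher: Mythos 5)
Your proposal is correct and follows essentially the same route as the paper's proof: subtract the discrete from the continuous weak formulation (using the regularity assumption to replace the duality pairing by the weighted $L^2$ inner product), split $u-u_h=e_h+\epsilon_h$, and kill the terms $(\partial_t^2\epsilon_h,w)_{\rho}$ and $(f-f_h,w)_{\rho}$ via the orthogonality of $\Pi_h$. Your explicit remark on the commutation of $\Pi_h$ with $\partial_t^2$ is a point the paper leaves implicit, but it is the same argument.
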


\begin{proof}
By subtracting \eqref{eq:FEM} from \eqref{eq:WF} and by using the assumption that $\partial_t^2u \in L^2\big(0,T;L^2(\Omega)\big)$, we obtain
\begin{align*}
(\partial_t^2(u-u_h),w)_{\rho} + a(u-u_h,w) &= (f-f_h,w)_{\rho}
\end{align*}
for all $w\in U_h$, almost every $t\in(0,T)$. Substituting $u-u_h = e_h+\epsilon_h$ and reordering the terms gives
\begin{align*}
(\partial_t^2e_h,w)_{\rho} + a(e_h,w) &=  - (\partial_t^2\epsilon_h,w)_{\rho} - a(\epsilon_h,w) + (f-f_h,w)_{\rho}
\end{align*}
for all $w\in U_h$, almost every $t\in(0,T)$. By definition of $\Pi_h$, $ (\partial_t^2\epsilon_h,w)_{\rho}=0$ and $(f-f_h,w)_{\rho}=(f -\Pih f,w)_{\rho} = 0$ for all $w\in U_h$. The above then results in \eqref{eq:err}.
\end{proof}

\begin{thm}
\label{thm:conv1}
Let $u$ be the weak solution of \eqref{eq:WF} and let $u_h$ be the degree-$p$ finite element approximation of \eqref{eq:FEM}, with $p\geq 1$ and with $u_{0,h}:=\Pih u_0$, $v_{0,h}:=\Pih v_0$, and $f_h:=\Pih f$. Assume regularity condition \eqref{eq:reg} holds for some $K\geq 0$ and assume $u,\partial_tu\in L^{\infty}\big(0,T;H^{k}(\Omega)\big)$ for some $k\geq 1$ and $\partial_t^2u\in L^{\infty}\big(0,T;L^2(\Omega)\big)$. Then
\begin{align}
&\qquad \|\partial_tu-\partial_tu_h\|_{\infty,-m*}+\|u-u_h\|_{\infty,1-m*} \leq  \label{eq:conv1} \\
& Ch^{\min(p,k-1)+\min(p,m,K+1)}(\|u\|_{\infty,\min(p+1,k)} + T\|\partial_tu\|_{\infty,\min(p+1,k)}) \nonumber
\end{align}
for any $m\geq 1$.
\end{thm}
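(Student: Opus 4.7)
The plan is to split the error as $u-u_h = \epsilon_h + e_h$, where $\epsilon_h := u-\Pih u$ is the $(\cdot,\cdot)_\rho$-projection error and $e_h := \Pih u - u_h \in U_h$ is the discrete error. The choices $u_{0,h}=\Pih u_0$, $v_{0,h}=\Pih v_0$ give $e_h(0)=\partial_t e_h(0)=0$, and Lemma~\ref{lem:err} then yields the error equation $(\partial_t^2 e_h,w)_\rho + a(e_h,w) = -a(\epsilon_h,w)$ for all $w\in U_h$. I would bound the $\epsilon_h$- and $e_h$-contributions separately in the adapted norms of~\eqref{eq:negnorm}.

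For the projection part, the key is duality combined with the $(\cdot,\cdot)_\rho$-orthogonality of $\epsilon_h$ to $U_h$. Expanding $\|\epsilon_h\|_{1-m*}$ via~\eqref{eq:negnorm} and using the symmetry of $\Lc$ with respect to $(\cdot,\cdot)_\rho$, I write it as a supremum of pairings $(\epsilon_h,\Lc^{-\alpha}\phi)_\rho$ over unit-ball test functions $\phi$ in $L^2$ or $H_0^1$, with $\alpha$ of order $m/2$. Orthogonality lets me replace $\Lc^{-\alpha}\phi$ by $\Lc^{-\alpha}\phi - \Pih \Lc^{-\alpha}\phi$, and each application of $\Lc^{-1}$ buys two Sobolev orders via~\eqref{eq:reg}, so interpolation yields the factor $h^{\min(p+1,m,K+2)}$. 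Combined with $\|\epsilon_h\|_0 \leq C h^{\min(p+1,k)}\|u\|_{\min(p+1,k)}$ and the identity $\min(p+1,k)=\min(p,k-1)+1$, this produces the $\epsilon_h$-part of~\eqref{eq:conv1}, and the analogous bound on $\partial_t\epsilon_h = \partial_t u - \Pih\partial_t u$ gives the $T\|\partial_t u\|$-contribution after integrating in time.

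For the discrete part, I would set up a backward wave dual problem. Given a unit test functional $g$ in the pre-dual of $\|\cdot\|_{-m*}$ or $\|\cdot\|_{1-m*}$ at time $T$, let $\phi$ solve $\partial_t^2\phi+\Lc\phi=0$ on $(0,T)$ with final data chosen so that two integrations by parts in time convert $(e_h(T),g)_\rho$ or $(\partial_t e_h(T),g)_\rho$ into
\[
\int_0^T \bigl[a(e_h,\phi)+(\partial_t^2 e_h,\phi)_\rho\bigr]\,dt.
\]
Testing the error equation against $\Pih\phi$ in place of $\phi$ (the difference on the mass term drops because $\partial_t^2 e_h\in U_h$), this integral reduces to $\int_0^T [-a(e_h,\phi-\Pih\phi)+a(\epsilon_h,\Pih\phi)]\,dt$, and the splitting $a(\epsilon_h,\Pih\phi)=(\epsilon_h,\Lc\phi-\Pih \Lc\phi)_\rho - a(\epsilon_h,\phi-\Pih\phi)$ (using $\rho$-orthogonality of $\epsilon_h$) reduces every factor to a projection error. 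Since $g$ is itself of the form $\Lc^{-\alpha}\psi$, \eqref{eq:reg} applied to the dual wave equation provides $\phi\in L^\infty(0,T;H^{\min(2\alpha+1,K+2)})$, hence $\|\phi-\Pih\phi\|_1 \leq C h^{\min(p,m,K+1)}$. Combined with the standard energy estimate $\|e_h\|_{\infty,1}\leq C h^{\min(p,k-1)}(\|u\|_{\infty,\min(p+1,k)}+T\|\partial_t u\|_{\infty,\min(p+1,k)})$ obtained by testing the error equation with $\partial_t e_h$ and using $e_h(0)=\partial_t e_h(0)=0$, this yields the $e_h$-part of~\eqref{eq:conv1}.

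The main obstacle is the duality step. One must choose the dual data so that \emph{both} $\|\partial_t(u-u_h)(T)\|_{-m*}$ and $\|(u-u_h)(T)\|_{1-m*}$ are captured, either by a single clever choice or by treating the parities of $m$ in parallel, and one must use the $(\cdot,\cdot)_\rho$-orthogonality of $\epsilon_h$ \emph{twice}, once to lift the test function under $\Lc^{-\alpha}$ and once to rewrite $a(\epsilon_h,\Pih\phi)$, in order to recover the full gain $h^{\min(p,m,K+1)}$ rather than only half of it. This is precisely the step where~\cite{douglas78} resort to a modified initial discretisation; the point here is that the plain $(\cdot,\cdot)_\rho$-projection of the initial data is already enough to close the argument.
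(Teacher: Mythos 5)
Your treatment of the projection error $\epsilon_h$ is sound and is essentially the paper's Lemma \ref{lem:int2}: the $(\cdot,\cdot)_{\rho}$-orthogonality of $\epsilon_h$ to $U_h$ plus the elliptic regularity \eqref{eq:reg} doubles the order in the adapted negative norms. The gap is in the discrete part. In your backward-dual step the datum representing the negative norm is $g=\Lc^{-\alpha}\psi$ with $\psi$ only in the unit ball of $L^2(\Omega)$ (for $\|\partial_te_h(T)\|_{-m*}$, $m=2\alpha$), resp.\ $g=\Lc^{-(\alpha-1)}\chi$ with $\chi$ in the unit ball of $H^1_0(\Omega)$ (for $\|e_h(T)\|_{1-m*}$). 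By \eqref{eq:reg} this gives $g\in H^{m}$, resp.\ $g\in H^{m-1}$, and since the higher energies conserved by the dual wave flow preserve exactly this level of smoothness, the dual solution satisfies only $\phi\in L^{\infty}(0,T;H^{\min(m,K+2)})$ --- not $H^{\min(2\alpha+1,K+2)}$ as you assert. Consequently $\|\phi-\Pih\phi\|_1\leq Ch^{\min(p,m-1,K+1)}$, one order short of the $h^{\min(p,m,K+1)}$ you need, and the term $\int_0^T a(e_h,\phi-\Pih\phi)\,dt$ cannot recover that order: $e_h$ lies \emph{in} $U_h$, so unlike $\epsilon_h$ it carries no orthogonality that would let you trade the $H^1$-pairing for an $L^2$-pairing with a smoother function. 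The same one-order deficit reappears in your rewriting $a(\epsilon_h,\Pih\phi)=(\epsilon_h,\Lc\phi-\Pih\Lc\phi)_{\rho}-a(\epsilon_h,\phi-\Pih\phi)$, since $\Lc\phi$ only lies in $H^{m-2}$. This is the classical obstruction to naive duality for second-order hyperbolic problems (Baker's remedy is to dualise the \emph{time-integrated} equation). A smaller additional issue: estimating $\int_0^T a(e_h,\phi-\Pih\phi)\,dt$ by $T\,\|e_h\|_{\infty,1}\sup_t\|\phi-\Pih\phi\|_1$ produces a growth like $T\|u\|_{\infty,\cdot}+T^2\|\partial_tu\|_{\infty,\cdot}$ rather than the stated $\|u\|_{\infty,\cdot}+T\|\partial_tu\|_{\infty,\cdot}$.

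The paper closes the argument without any dual wave problem. It tests the error equation with $w=\Lh^{-2\alpha}\partial_te_h$ (resp.\ $\Lh^{-(2\alpha+1)}\partial_te_h$ for odd $m$) and uses the symmetry of $\Lh^{-1}$ (Lemma \ref{lem:L1}) to obtain an exact identity $\partial_tE_h=R_h$ for the weighted energy $E_h=\tfrac12(\Lh^{-\alpha}\partial_te_h,\Lh^{-\alpha}\partial_te_h)_{\rho}+\tfrac12a(\Lh^{-\alpha}e_h,\Lh^{-\alpha}e_h)$, whose only residual is $R_h=-\partial_ta(\epsilon_h,\Lh^{-2\alpha}e_h)+a(\partial_t\epsilon_h,\Lh^{-2\alpha}e_h)$. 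There all $m$ inverse powers sit directly on the orthogonal projection error (after exchanging $\Lh^{-\alpha}$ for $\Lc^{-\alpha}$ via Lemma \ref{lem:ell4} and invoking Lemma \ref{lem:int2}), so the full gain $h^{\min(p,m,K+1)}$ is realised, and the time integration by parts built into $R_h$ yields precisely the $\|u\|_{\infty,\cdot}+T\|\partial_tu\|_{\infty,\cdot}$ dependence. If you wish to salvage a duality proof you would have to apply one more inverse of $\Lc$ to the dual data (equivalently, integrate the dual problem in time), which is in effect what the discrete test function $\Lh^{-m}\partial_te_h$ accomplishes.
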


\begin{proof}
We first consider the case $m=2\alpha$, with $\alpha\geq 0$. Define discrete error $e_h:=\Pih u-u_h$ and projection error $\epsilon_h:=u-\Pih u$ and use Lemma \ref{lem:err} to obtain
\begin{align}
\label{eq:conv1a}
(\partial_t^2e_h,w)_{\rho} + a(e_h,w) &= - a(\epsilon_h,w)
\end{align}
for all $w\in U_h$, almost every $t\in(0,T)$. Set $w=\Lh^{-2\alpha}\partial_te_h$ and use Lemma \ref{lem:L1} to obtain
\begin{align*}
\partial_tE_h &=  R_h &&\text{for a.e. }t\in(0,T),
\end{align*} 
where
\begin{align*}
E_h &:= \frac12(\Lh^{-\alpha}\partial_te_h,\Lh^{-\alpha}\partial_te_h)_{\rho} + \frac12a(\Lh^{-\alpha}e_h,\Lh^{-\alpha}e_h), \\
R_{h} &:=- \partial_ta(\epsilon_h,\Lh^{-2\alpha}e_h) + a(\partial_t\epsilon_h,\Lh^{-2\alpha}e_h).
\end{align*}
Integrating over $(0,T')$, with $T'\in(0,T)$, results in 
\begin{align}
E_h|_{t=T'} &= E_h|_{t=0} + \int_0^{T'} R_h \;dt &&\text{for a.e. }T'\in(0,T).
\label{eq:conv1c}
\end{align}
From the boundedness of $\rho$ and the coercivity of $a$, it follows that
\begin{align}
\|\Lh^{-\alpha}\partial_te_h\|_{0} + \|\Lh^{-\alpha}e_h\|_{1} &\leq C\sqrt{E_h} , &&\text{for a.e. }t\in(0,T).
\label{eq:conv1d}
\end{align}
To bound $R_h$, we derive the following:
\begin{align*}
|a(\partial_t^i\epsilon_h,\Lh^{-2\alpha}e_h)| 
&\leq |a\big(\partial_t^i\epsilon_h,(\Lh^{-\alpha}-\Lc^{-\alpha})\Lh^{-\alpha}e_h\big)| + |a\big(\partial_t^i\epsilon_h,\Lc^{-\alpha}\Lh^{-\alpha}e_h\big)| \\
&= |a\big(\partial_t^i\epsilon_h,(\Lh^{-\alpha}-\Lc^{-\alpha})\Lh^{-\alpha}e_h\big)| + |a\big(\Lc^{-\alpha}\partial_t^i\epsilon_h,\Lh^{-\alpha}e_h\big)| \\
&\leq Ch^{\min(p,2\alpha,K+1)} \|\partial_t^i\epsilon_h\|_1 \|\Lh^{-\alpha}e_h\|_{1} + C\|\Lc^{-\alpha}\partial_t^i\epsilon_h\|_1\|\Lh^{-\alpha}e_h\|_{1} \\
&\leq Ch^{\min(p,k-1)+\min(p,2\alpha,K+1)} \|\partial_t^iu\|_{\min(p+1,k)}\|\Lh^{-\alpha}e_h\|_{1} 
\end{align*}
for $i=0,1$ and almost every $t\in(0,T)$. Here, the first line follows from the triangle inequality, the second line follows from Lemma \ref{lem:L1}, the third line follows from the Cauchy--Scwarz inequality and Lemma \ref{lem:ell4}, and the last line follows from Lemma \ref{lem:int} and Lemma \ref{lem:int2}. Using this inequality, we can obtain
\begin{align}
& \qquad \left | \int_0^{T'} R_h \;dt \right| \leq \label{eq:conv1e} \\
&Ch^{\min(p,k-1)+\min(p,2\alpha,K+1)}\big(\|u\|_{\infty,\min(p+1,k)}+T\|\partial_t u\|_{\infty,\min(p+1,k)}\big) \|\Lh^{-\alpha}e_h\|_{\infty,1} \nonumber
\end{align}
for almost every $T'\in(0,T)$. Since $u_{h,0}=\Pih u_0$ and $v_{h,0}=\Pih v_0$, we have $e_h|_{t=0}=0$ and $\partial_te_h|_{t=0} =0$ and therefore
\begin{align}
E_h|_{t=0} &= 0 \label{eq:conv1g}.
\end{align}
Taking the essential supremum of \eqref{eq:conv1c} over all $T'\in(0,T)$ and using \eqref{eq:conv1d}, \eqref{eq:conv1e}, and \eqref{eq:conv1g} results in
\begin{align}
 \label{eq:conv1h}
& \qquad \|\Lh^{-\alpha}\partial_te_h\|_{\infty,0}+\|\Lh^{-\alpha}e_h\|_{\infty,1} \leq \\
& Ch^{\min(p,k-1)+\min(p,2\alpha,K+1)}(\|u\|_{\infty,\min(p+1,k)} + T\|\partial_tu\|_{\infty,\min(p+1,k)}). \nonumber
\end{align}
Using this result, we can derive
\begin{align*}
& \qquad\qquad \|\Lc^{-\alpha}\partial_te_h\|_{\infty,0}+\|\Lc^{-\alpha}e_h\|_{\infty,1} \leq  \\
&\leq \|(\Lc^{-\alpha}-\Lh^{-\alpha})\partial_te_h\|_{\infty,0} + \|(\Lc^{-\alpha}-\Lh^{-\alpha})e_h\|_{\infty,1} + \|\Lh^{-\alpha}\partial_te_h\|_{\infty,0}  + \|\Lh^{-\alpha}e_h\|_{\infty,1} \\
&\leq Ch^{\min(p,2\alpha,K+1)} ( \| \partial_te_h\|_{\infty,0}  + \|e_h\|_{\infty,1}) + \|\Lh^{-\alpha}\partial_te_h\|_{\infty,0}  + \|\Lh^{-\alpha}e_h\|_{\infty,1} \\
&\leq Ch^{\min(p,k-1)+\min(p,2\alpha,K+1)} (\|u\|_{\infty,\min(p+1,k)} + T\|\partial_tu\|_{\infty,\min(p+1,k)}).
\end{align*}
Here, we used the triangle inequality in the second line, Lemma \ref{lem:ell4} in the third line, and \eqref{eq:conv1h} in the last line. Since $u-u_h = e_h+\epsilon_h$, \eqref{eq:conv1} then follows from the above and Lemma \ref{lem:int2}.

In case $m=2\alpha+1$, we choose $w$ in \eqref{eq:conv1a} as $w=\Lh^{-(2\alpha+1)}$. Using Lemma \ref{lem:L1}, we can then derive 
\begin{align*}
\partial_tE_h &=  R_h &&\text{for a.e. }t\in(0,T),
\end{align*} 
where
\begin{align*}
E_h &:= \frac12a(\Lh^{-(\alpha+1)}\partial_te_h,\Lh^{-(\alpha+1)}\partial_te_h) + \frac12(\Lh^{-\alpha}e_h,\Lh^{-\alpha}e_h)_{\rho}, \\
R_{h} &:=- \partial_ta(\epsilon_h,\Lh^{-(2\alpha+1)}e_h) + a(\partial_t\epsilon_h,\Lh^{-(2\alpha+1)}e_h).
\end{align*}
The rest of the proof is analogous to the proof for the case $m=2\alpha$.
\end{proof}

\begin{lem}
\label{lem:r2}
Let $w\in L^{\infty}\big(0,T;L^2(\Omega)\big)$ and $w_h\in L^{\infty}\big(0,T; U_h\big)$ be two arbitrary functions. Also, let $k\geq 0$ and $\alpha\geq 1$ and assume that $\partial_t^{i}f \in L^{\infty}\big(0,T;L^2(\Omega)\big)$ for all $i\leq k+2\alpha-2$. Then
\begin{subequations}
\label{eq:r2b}%
\begin{align}
r_k^{(-\alpha)}(w) &= (-1)^\alpha\Lc^{-\alpha}w + \sum_{\alpha'=1}^{\alpha} (-1)^{(\alpha'-1)} \Lc^{-\alpha'}\partial_t^{k+2(\alpha'-1)}f, \\
r_{k,h}^{(-\alpha)}(w_h) &= (-1)^\alpha\Lh^{-\alpha}w_h + \sum_{\alpha'=1}^{\alpha} (-1)^{(\alpha'-1)} \Lh^{-\alpha'}\partial_t^{k+2(\alpha'-1)}f_h. 
\end{align}
\end{subequations}
\end{lem}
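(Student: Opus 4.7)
The plan is to prove both identities simultaneously by induction on $\alpha\geq 1$, since the algebra is identical: the continuous and discrete cases only differ by swapping $(\Lc^{-1},f)$ with $(\Lh^{-1},f_h)$. I will focus on the continuous identity; the discrete one is obtained by the same steps.

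For the base case $\alpha=1$, I simply unfold the definition: by construction $r_k^{(-1)}(w)=\Lc^{-1}(-w+\partial_t^k f) = -\Lc^{-1}w + \Lc^{-1}\partial_t^k f$, which matches the right-hand side of \eqref{eq:r2b} at $\alpha=1$ (a single summand with $\alpha'=1$).

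For the inductive step, I will use that the definition of $r_k^{(-\alpha)}$ as a composition gives the recursive identity
\begin{align*}
r_k^{(-(\alpha+1))} \;=\; r_k^{(-1)} \circ r_{k+2}^{(-\alpha)} .
\end{align*}
Apply the induction hypothesis to $r_{k+2}^{(-\alpha)}(w)$ (with $k$ replaced by $k+2$) to get
\begin{align*}
r_{k+2}^{(-\alpha)}(w) = (-1)^\alpha \Lc^{-\alpha}w + \sum_{\alpha'=1}^{\alpha}(-1)^{\alpha'-1}\Lc^{-\alpha'}\partial_t^{k+2\alpha'}f,
\end{align*}
then hit this with $r_k^{(-1)}(\cdot)=-\Lc^{-1}(\cdot)+\Lc^{-1}\partial_t^k f$, using linearity of $\Lc^{-1}$. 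Reindex the resulting sum via $\alpha''=\alpha'+1$; the isolated term $\Lc^{-1}\partial_t^k f$ then slots in as the missing $\alpha''=1$ summand, and the leading piece becomes $(-1)^{\alpha+1}\Lc^{-(\alpha+1)}w$. This yields exactly the claimed formula at $\alpha+1$.

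The discrete statement follows by the same manipulation, now using $r_{k,h}^{(-1)}(w_h)=-\Lh^{-1}w_h+\Lh^{-1}\partial_t^k f_h$ and linearity of $\Lh^{-1}$ on $U_h$. The regularity hypothesis $\partial_t^{i}f\in L^{\infty}(0,T;L^2(\Omega))$ for $i\leq k+2\alpha-2$ is used only to guarantee that each $\partial_t^{k+2(\alpha'-1)}f$ (and hence $\Pih\partial_t^{k+2(\alpha'-1)}f=\partial_t^{k+2(\alpha'-1)}f_h$) lies in $L^2(\Omega)$, so that the operators $\Lc^{-\alpha'}$ and $\Lh^{-\alpha'}$ may be applied term-by-term. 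I do not anticipate any real obstacle; the only thing to be careful about is the shift $k\mapsto k+2$ under the composition and the corresponding reindexing of the sum, which is what produces the pattern $\partial_t^{k+2(\alpha'-1)}f$ in the final expression.
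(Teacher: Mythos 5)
Your proof is correct and follows exactly the route the paper intends: the paper's own proof consists of the single line ``the results follow readily from induction on $\alpha$,'' and your write-up simply carries out that induction, with the correct base case, the correct recursive identity $r_k^{(-(\alpha+1))}=r_k^{(-1)}\circ r_{k+2}^{(-\alpha)}$, and the correct reindexing. Nothing further is needed.
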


\begin{proof}
The results follow readily from induction on $\alpha$.
\end{proof}

\begin{thm}
\label{thm:conv2}
Let $u$ be the weak solution of \eqref{eq:WF} and let $u_h$ be the degree-$p$ finite element approximation of \eqref{eq:FEM}, with $p\geq 1$ and with $f_h:=\Pih f$. Assume regularity condition \eqref{eq:reg} holds for some $K\geq 0$. Also, let $q\geq 0$ and $k\geq 1$ and assume that $\partial_t^{i}u\in L^{\infty}\big(0,T;H^{k}(\Omega)\big)$ for all $i\leq q+1$, $\partial_t^{q+2}u \in L^{\infty}\big(0,T;L^2(\Omega)\big)$ and $\partial_t^{i}f \in L^{\infty}\big(0,T;L^2(\Omega)\big)$ for all $i\leq q$. If the initial conditions are discretized by
\begin{align*}
u_{0,h} &= \big(r^{(-\alpha)}_{0,h} \circ \Pi_h r^{(\alpha)}_0\big) \big(u_0\big), &&\alpha = \lceil q/2\rceil, \\
v_{0,h} &= \big(r^{(-\beta)}_{1,h} \circ \Pi_h  r^{(\beta)}_1\big) \big(v_0\big), &&\beta = \lfloor q/2\rfloor,
\end{align*}
and if the post-processed solution is computed by
\begin{align*}
u_{h*}(t) &= \big(r^{(-\alpha)}_0 \circ r^{(\alpha)}_{0,h}\big) \big(u_{h}(t)\big) , &&\alpha = \lceil q/2\rceil, \\
v_{h*}(t) &= \big(r^{(-\beta)}_1 \circ r^{(\beta)}_{1,h}\big) \big(\partial_tu_{h}(t)\big) , &&\beta = \lfloor q/2\rfloor,
\end{align*}
then
\begin{align}
& \qquad \|\partial_tu-v_{h*}\|_{\infty,-m*}+\|u-u_{h*}\|_{\infty,1-m*} \leq \label{eq:conv2a} \\
& Ch^{\min(p,k-1)+\min(p,q+m,K+1)}(\|\partial_t^q u\|_{\infty,\min(p+1,k)} + T\|\partial_t^{q+1}u\|_{\infty,\min(p+1,k)}) \nonumber
\end{align}
for all $m\geq 0$.
\end{thm}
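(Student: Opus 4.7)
The plan is to apply Theorem \ref{thm:conv1} to the wave problem satisfied by the time derivative $\partial_t^q u$ (with finite element approximation $\partial_t^q u_h$), and then translate the resulting estimate — expressed in adapted negative-order norms of time-differentiated errors — back into a bound on the post-processed errors $u - u_{h*}$ and $\partial_t u - v_{h*}$.

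First I would differentiate \eqref{eq:wave1a} and \eqref{eq:wave1b} exactly $q$ times in $t$, obtaining $\partial_t^2(\partial_t^q u) + \Lc(\partial_t^q u) = \partial_t^q f$ and $\partial_t^2(\partial_t^q u_h) + \Lh(\partial_t^q u_h) = \Pih \partial_t^q f$, since $\Pih$ commutes with $\partial_t$. This is exactly the setting of Theorem \ref{thm:conv1} applied to $\partial_t^q u$, provided the initial data of $\partial_t^q u_h$ is the weighted $L^2$-projection of the initial data of $\partial_t^q u$. I would verify this by unrolling \eqref{eq:wave3b} using the identity $r^{(\alpha)}_{0,h} \circ r^{(-\alpha)}_{0,h} = \mathrm{id}$ on $U_h$: for $q$ even one gets $(\partial_t^q u_h)|_{t=0} = r^{(\alpha)}_{0,h}(u_{0,h}) = \Pih\, r^{(\alpha)}_0 u_0 = \Pih (\partial_t^q u)|_{t=0}$ with $\alpha = q/2$, and similarly $(\partial_t^{q+1} u_h)|_{t=0} = \Pih (\partial_t^{q+1} u)|_{t=0}$; for $q$ odd the same computation yields both identifications but with the roles of $u_{0,h}$ and $v_{0,h}$ interchanged. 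The regularity hypotheses of the theorem then give exactly the hypotheses of Theorem \ref{thm:conv1}, which applied with its parameter ``$m$'' replaced by $q+m$ produces
\begin{align*}
& \|\partial_t^{q+1} u - \partial_t^{q+1} u_h\|_{\infty,-(q+m)*} + \|\partial_t^q u - \partial_t^q u_h\|_{\infty,1-(q+m)*} \\
& \qquad \le Ch^{\min(p,k-1)+\min(p,q+m,K+1)}\bigl(\|\partial_t^q u\|_{\infty,\min(p+1,k)} + T\|\partial_t^{q+1} u\|_{\infty,\min(p+1,k)}\bigr).
\end{align*}

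To translate this back to the post-processed errors, I would invoke Lemma \ref{lem:r2} to note that the source-term contributions in $r^{(-\alpha)}_0$ and $r^{(-\beta)}_1$ cancel in the differences, giving the clean identities $u - u_{h*} = (-1)^{\alpha}\, \Lc^{-\alpha}(\partial_t^{2\alpha} u - \partial_t^{2\alpha} u_h)$ and $\partial_t u - v_{h*} = (-1)^{\beta}\, \Lc^{-\beta}(\partial_t^{2\beta+1} u - \partial_t^{2\beta+1} u_h)$. From the definition \eqref{eq:negnorm} one reads off the shift rule $\|\Lc^{-\alpha'} w\|_{\ell*} = \|w\|_{(\ell - 2\alpha')*}$ for $\alpha' \ge 0$ and $\ell \le 1$, which yields $\|u - u_{h*}\|_{1-m*} = \|\partial_t^{2\alpha} u - \partial_t^{2\alpha} u_h\|_{1-(m+2\alpha)*}$ and $\|\partial_t u - v_{h*}\|_{-m*} = \|\partial_t^{2\beta+1} u - \partial_t^{2\beta+1} u_h\|_{-(m+2\beta)*}$. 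A parity check shows that the unordered pair $\{2\alpha, 2\beta+1\}$ is always $\{q, q+1\}$ and that the above two norms together always reproduce $\|\partial_t^q u - \partial_t^q u_h\|_{1-(q+m)*}$ and $\|\partial_t^{q+1} u - \partial_t^{q+1} u_h\|_{-(q+m)*}$, just with roles possibly swapped. Summing and substituting into the bound from the previous paragraph delivers \eqref{eq:conv2a}.

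The main obstacle I anticipate is the case-by-case bookkeeping forced by the parity of $q$: both the identification of $(\partial_t^q u_h)|_{t=0}$ and $(\partial_t^{q+1} u_h)|_{t=0}$ with $\Pih$-projections, and the correspondence between $\{u - u_{h*},\, \partial_t u - v_{h*}\}$ and $\{\partial_t^q u - \partial_t^q u_h,\, \partial_t^{q+1} u - \partial_t^{q+1} u_h\}$, swap between $q$ even and $q$ odd, so both steps must be checked in each case. Once this bookkeeping is organised carefully, no analytical ingredient beyond Theorem \ref{thm:conv1} and Lemma \ref{lem:r2} is required; the corner case $q=m=0$ is covered by the standard $m=0$ energy estimate, which is obtained by the same argument as in the proof of Theorem \ref{thm:conv1} with the test function $w = \partial_t e_h$.
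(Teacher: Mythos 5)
Your proposal is correct and follows essentially the same route as the paper: identify $\partial_t^q u$ and $\partial_t^q u_h$ as an exact solution/FEM pair whose discrete initial data are the weighted $L^2$-projections of the exact ones (via $r^{(\alpha)}_{k,h}\circ r^{(-\alpha)}_{k,h}=\mathrm{id}$), apply Theorem \ref{thm:conv1} with $m'=q+m$, and convert back through Lemma \ref{lem:r2} and the shift property of the adapted negative-order norms, with the same even/odd bookkeeping the paper carries out. Your explicit remark that the $m'=0$ case is covered by the standard energy argument (the paper's Theorem \ref{thm:conv1} proof already treats $m=2\alpha$ with $\alpha\geq 0$) is a welcome extra precision.
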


\begin{proof}
From the regularity of the time derivatives of $u$ and $f$, it follows that
\begin{subequations}
\label{eq:dt1}%
\begin{align}
\partial_t^{q+2}u + \Lc \partial_t^q u &= \partial_t^q f, \label{eq:dt1a} \\
\partial_t^{q+2}u_h + \Lh \partial_t^q u_h &= \partial_t^q f_h. \label{eq:dt1b}
\end{align}
\end{subequations}
for a.e. $(x,t)\in\Omega\times(0,T)$. From the definitions of $r_i^{(\kappa)}$ and $r_{i,h}^{(\kappa)}$, it also follows that
\begin{subequations}
\label{eq:r1}%
\begin{align}
\partial_t^{i+2\kappa}u&=r_i^{(\kappa)}(\partial_t^iu),   \label{eq:r1a} \\
\partial_t^{i+2\kappa}u_h&=r_{i,h}^{(\kappa)}(\partial_t^iu_h), \label{eq:r1b}
\end{align}
\end{subequations}
for $i+2\kappa\leq q+1$.

Now, define $u^{(q)}:=\partial_t^qu$ and $u^{(q)}_h:=\partial_t^qu_h$. From \eqref{eq:dt1}, it follows that $u^{(q)}$ is the solution of the weak formulation given in \eqref{eq:WF} when we replace $u_0$, $v_0$, and $f$, by $u^{(q)}_0:=\partial_t^qu|_{t=0}$, $u^{(q+1)}_0:=\partial_t^{q+1}u|_{t=0}$, and $f^{(q)}:=\partial_t^q f$, respectively, and $u^{(q)}_h$ is the finite element approximation given in \eqref{eq:FEM} when we replace $u_{0,h}$, $v_{0,h}$, and $f_h$, by $u^{(q)}_{0,h}:=\partial_t^qu_h|_{t=0}$, $u^{(q+1)}_{0,h}:=\partial_t^{q+1}u_h|_{t=0}$, and $f^{(q)}_h:=\partial_t^q f_h=\Pih f^{(q)}$, respectively. 

We now prove that, from the discretisation of $u_{h,0}$ and $v_{h,0}$, it follows that
\begin{align}
\label{eq:conv2c}
u^{(q)}_{0,h} = \Pih u^{(q)}_{0} \text{ and } u^{(q+1)}_{0,h} = \Pih u^{(q+1)}_{0} .
\end{align}
To prove this, we first consider the case $q=2\alpha$. Then $\beta=\alpha$ and we can derive 
\begin{align*}
u^{(q)}_{0,h} = r_{0,h}^{(\alpha)}(u_{0,h}) = (r_{0,h}^{(\alpha)} \circ r^{(-\alpha)}_{0,h} \circ \Pih r^{(\alpha)}_0) (u_0) = \Pih r_{0}^{(\alpha)}(u_{0}) = \Pih u^{(q)}_{0}.
\end{align*}
Here, the first equality follows from \eqref{eq:r1b}, the second equality from the discretisation of $u_{h,0}$, and the last equality from \eqref{eq:r1a}. In an analogous way, we can show that
\begin{align*}
u^{(q+1)}_{0,h} = r_{1,h}^{(\beta)}(v_{0,h}) = (r_{1,h}^{(\beta)} \circ r^{(-\beta)}_{1,h} \circ \Pih  r^{(\beta)}_1) (v_0)   = \Pih r_{1}^{(\beta)}(v_{0}) = \Pih u^{(q+1)}_{0}.
\end{align*}
The proof of \eqref{eq:conv2c} for the case $q=2\alpha-1$ is analogous to that for the case $q=2\alpha$. From Theorem \ref{thm:conv1}, it then follows that
\begin{align}
\label{eq:conv2d}
&\qquad \|u^{(q+1)}-u^{(q+1)}_h\|_{\infty,-m'*}+\|u^{(q)}-u^{(q)}_h\|_{\infty,1-m'*} \leq \\
& Ch^{\min(p,k-1)+\min(p,m',K+1)}(\|u^{(q)}\|_{\infty,\min(p+1,k)} + T\|u^{(q+1)}\|_{\infty,\min(p+1,k)})  \nonumber
\end{align}
for all $m'\geq 0$. 

Next, we prove that
\begin{align}
\label{eq:conv2e}
& \|\partial_tu - v_{h*}\|_{-m*} + \|u-u_{h*}\|_{1-m*} =  \\
& \qquad  \|u^{(q+1)}-u^{(q+1)}_h\|_{-(q+m)*} + \|u^{(q)}-u^{(q)}_h\|_{1-(q+m)*}, \nonumber
\end{align}
for $m\geq 0$, a.e. $t\in(0,T)$. We first consider the case $q=2\alpha$. Then $\beta=\alpha$ and we can derive
\begin{align*}
\|u-u_{h*}\|_{1-m*} &= \|u-(r_{0}^{(-\alpha)}\circ r_{0,h}^{(\alpha)})(u_h) \|_{1-m*} \\
&=  \| r_{0}^{(-\alpha)}(u^{(q)})-r_{0}^{(-\alpha)}(u^{(q)}_h) \|_{1-m*} \\
&= \|\Lc^{-\alpha}(u^{(q)}-u^{(q)}_h) \|_{1-m*} \\
&= \|u^{(q)}-u^{(q)}_h\|_{1-(q+m)*}.
\end{align*}
Here, the first equality follows from the definition of $u_{h*}$, the second equality follows from \eqref{eq:r1}, the third equality follows from Lemma \ref{lem:r2}, and the last equality follows from the definitions of the adapted negative-order Sobolev norm. In an analogous way, we can derive
\begin{align*}
\|\partial_tu-v_{h*}\|_{-m*} &= \|\partial_tu-(r_{1}^{(-\beta)} \circ r_{1,h}^{(\beta)})(\partial_tu_h) \|_{-m*} \\
&=  \| r_{1}^{(-\beta)}(u^{(q+1)})- r_{1}^{(-\beta)}(u^{(q+1)}_h) \|_{-m*} \\
&= \|\Lc^{-\beta}(u^{(q+1)}-u^{(q+1)}_h) \|_{-m*} \\
&= \| u^{(q+1)}-u^{(q+1)}_h \|_{-(q+m)*}.
\end{align*}
Together, these last two equalities result in \eqref{eq:conv2e} for the case $q=2\alpha$. The proof for the case $q=2\alpha-1$ is analogous to that for $q=\alpha$. Inequality \eqref{eq:conv2a} then follows immediately from \eqref{eq:conv2d} and \eqref{eq:conv2e} when we set $m'=q+m$.
\end{proof}

By taking $m=0$ and $m=1$ in Theorem \ref{thm:conv2}, we immediately obtain the following.

\begin{cor}
\label{cor:conv3}
Let $u$ be the weak solution of \eqref{eq:WF}, let $u_h$ be the degree-$p$ finite element approximation of \eqref{eq:FEM}, with $p\geq 1$ and with $f_h$, $u_{0,h}$, $v_{0,h}$, $u_{h*}$, and $v_{h*}$ chosen as in Theorem \ref{thm:conv2}. If the assumptions in Theorem\ref{thm:conv2} hold for a certain $q\geq 0$ and for $k=p+1$ and $K=p-1$, then
\begin{align}
\|\partial_tu-v_{h*}\|_{\infty,0}+\|u-u_{h*}\|_{\infty,1} &\leq Ch^{p+\min(p,q)}(\|\partial_t^q u\|_{\infty,p+1} + T\|\partial_t^{q+1}u\|_{\infty,p+1})  \label{eq:conv3a}
\end{align}
and
\begin{align}
\|u-u_{h*}\|_{\infty,0} &\leq Ch^{p+\min(p,q+1)}(\|\partial_t^q u\|_{\infty,p+1} + T\|\partial_t^{q+1}u\|_{\infty,p+1}).  \label{eq:conv3b}
\end{align}
\end{cor}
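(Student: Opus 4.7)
The plan is to apply Theorem \ref{thm:conv2} twice, once with $m=0$ and once with $m=1$, and read off the two inequalities after simplifying the min-expressions using the specific choices $k=p+1$ and $K=p-1$. Since the corollary is essentially a specialization of Theorem \ref{thm:conv2}, there is no real obstacle, just bookkeeping on norms and exponents.

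First I would substitute $k=p+1$ and $K=p-1$ into the exponent $h^{\min(p,k-1)+\min(p,q+m,K+1)}$ given by Theorem \ref{thm:conv2}. The first term becomes $\min(p,p)=p$, and since $K+1=p$, the combined exponent collapses to $p+\min(p,q+m)$. Likewise, $\min(p+1,k)=p+1$, so the regularity norms reduce to $\|\partial_t^q u\|_{\infty,p+1}$ and $\|\partial_t^{q+1}u\|_{\infty,p+1}$, matching the right-hand sides of \eqref{eq:conv3a} and \eqref{eq:conv3b}.

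For \eqref{eq:conv3a}, I would set $m=0$. By the convention stated just after \eqref{eq:negnorm}, namely $\|\cdot\|_{0*}=\|\cdot\|_0$ and $\|\cdot\|_{1*}=\|\cdot\|_1$, the left-hand side of \eqref{eq:conv2a} becomes exactly $\|\partial_tu-v_{h*}\|_{\infty,0}+\|u-u_{h*}\|_{\infty,1}$, and the exponent is $p+\min(p,q)$, yielding \eqref{eq:conv3a} immediately.

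For \eqref{eq:conv3b}, I would set $m=1$, so the left-hand side of \eqref{eq:conv2a} becomes $\|\partial_tu-v_{h*}\|_{\infty,-1*}+\|u-u_{h*}\|_{\infty,0}$ and the exponent becomes $p+\min(p,q+1)$. Since the first summand on the left is nonnegative, dropping it only weakens the inequality, giving \eqref{eq:conv3b}. The only care needed is to check that the $k=p+1$ regularity required by Theorem \ref{thm:conv2} is exactly the hypothesis we are assuming, and that the assumed regularity condition \eqref{eq:reg} with $K=p-1$ is enough for both values of $m$, which it is since $q+m\geq 0$ makes the min active at $K+1=p$ whenever $q+m\geq p$.
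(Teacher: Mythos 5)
Your proposal is correct and is exactly the paper's argument: the paper proves the corollary by taking $m=0$ and $m=1$ in Theorem \ref{thm:conv2} and noting that with $k=p+1$ and $K=p-1$ the exponent collapses to $p+\min(p,q+m)$, with the $\|\cdot\|_{-1*}$ term dropped in the $m=1$ case. Your bookkeeping of the min-expressions and the norm conventions $\|\cdot\|_{0*}=\|\cdot\|_0$, $\|\cdot\|_{1*}=\|\cdot\|_1$ is accurate.
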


\begin{rem}
While the leading constant in Theorem \ref{thm:conv2} depends on the shape-regularity of the elements, the theory is not restricted to (quasi-)uniform meshes but holds for general unstructured meshes.
\end{rem}

\section{Implementation and numerical tests}
\label{sec:tests}

\subsection{Time discretisation}
In the error analysis, we considered exact integration in space and time. In practice, however, we also need to discretise in time and use numerical integration to evaluate the spatial integrals and lump the mass matrix. 

To discretise in time, we first rewrite the finite element formulation given in \eqref{eq:FEM} as a system of ODE's. To do this, we use nodal basis functions. Let $\mathcal{Q}_h=\{\vx_i\}_{i=1}^N$ be a set of nodes of the form
\begin{align*}
\mathcal{Q}_h=\mathcal{Q}(\mathcal{T}_h,\hat{\mathcal{Q}}_I) \setminus \partial\Omega := \left(\bigcup_{e\in\mathcal{T}_h, \hat\vx\in\hat{\mathcal{Q}}} \phi_e(\hat\vx) \right) \setminus \partial\Omega,
\end{align*}
where $\hat{\mathcal{Q}}_I$ are the nodes on the reference element, and let $\{w_i\}_{i=1}^N$ be the nodal basis functions that span the discrete space $U_h$ and satisfy $w_i(\vx_j)=\delta_{ij}$, for $i,j=1,\dots,N$, with $\delta$ the Kronecker delta. Also, for any $f\in \mathcal{C}^0(\overline\Omega)$, define the vector $\vvct{f}\in\mathbb{R}^N$ as $\vvct{f}_i:=f(\vx_i)$ for $i=1,\dots,N$. The finite element formulation can then be written as finding $\vvct{u_h}:[0,T]\rightarrow\mathbb{R}^n$, such that $\vvct{u_h}|_{t=0}=\vvct{u_{0,h}}$, $\partial_t\vvct{u_h}|_{t=0}=\vvct{v_{0,h}}$, and 
\begin{align}
\partial_t^2\vvct{u_h} + L_h\vvct{u_h} &= \vvct{f_h}, && \text{for a.e. }t\in(0,T),
\end{align}
where $L_h:=M^{-1}A$ and $M,A\in\mathbb{R}^{N\times N}$ are the mass matrix and stiffness matrix, respectively, given by
\begin{align*}
M_{ij} = (w_i,w_j)_{\rho}, \qquad
A_{ij} = a(w_i,w_j).
\end{align*}

For the time discretisation, let $\Delta t$ denote the time step size, let $t_n:=n\Delta t$, and let $\vvu_h^n$ denote the approximation of $\vvct{u_h}(t_n)$. In order to maintain an order-$2p$ convergence rate, we use an order-$2p$ Dablain scheme \cite{dablain86} for time-stepping, which is given by
\begin{align*}
\vvu_h^{n+1} &= -\vvu_h^{n-1} + 2\vvu_h^n +  2\sum_{\alpha=1}^p \frac{\Delta t^{2\alpha}}{(2\alpha)!} \vvct{\partial_t^{2\alpha}u}_h^n, &&n\geq 1,
\end{align*}
where $\vvct{\partial_t^{k}u}_h^n$ is recursively defined by
\begin{align}
\label{eq:wave4a}
\vvct{\partial_t^{k+2}u}_h^n &:=  -L_h\left(\vvct{\partial_t^ku}_h^n\right) + \partial_t^k\vvct{f_h}(t_n) ,&&k\geq 0, n\geq 0
\end{align}
so the approximation $\vvu_h^{n+1}$ is computed using the two previous approximations $\vvu_h^{n-1}$ and $\vvu_h^{n}$. For the first time step, the computations are as follows:
\begin{align*}
\vvu_h^1 &= \vvu_h^0 + \Delta t\vvct{\partial_tu}_h^0 + \sum_{\alpha=1}^p \frac{\Delta t^{2\alpha}}{(2\alpha)!} \vvct{\partial_t^{2\alpha}u}_h^0 + \sum_{\alpha=1}^p \frac{\Delta t^{2\alpha+1}}{(2\alpha+1)!} \vvct{\partial_t^{2\alpha+1}u}_h^0,
\end{align*}
with $\vvu_h^0:= \vvct{u_{0,h}}$ and $\vvct{\partial_tu}_h^0:=\vvct{v_{0,h}}$. This scheme is commonly used for wave propagation modelling and has the advanatge that it only requires $p$ stages to obtain an order-$2p$ convergence rate. 

Dablain's scheme only computes the displacement field. A second-order approximation of the velocity $\underline{\partial_tu_h}$ at some time $t_n$ can be obtained by 
\begin{align*}
\vvv_h^{n,2} &:= \frac{\vvu_h^{n+1}-\vvu_h^{n-1}}{2\Delta t}.
\end{align*}
In case $\vvu_h^{n+1}=\vvct{u_h}(t_{n+1})$, $\vvu_h^{n-1}=\vvct{u_h}(t_{n-1})$, and $\vvct{\partial_tu}_h^n=\vvct{\partial_tu_h}(t_n)$, the Taylor approximation of $\vvv_h^{n,2}$ around $t_n$ is given by
\begin{align*}
\vvv_h^{n,2} &= \vvct{\partial_tu}_h^n + \sum_{\alpha=1}^{p-1}\frac{\Delta t^{2\alpha}}{(2\alpha+1)!} \vvct{\partial_t^{2\alpha+1}u}_h^n + \mathcal{O}(\Delta t^{2p}).
\end{align*}
Using this expansion, we can obtain higher-order approximations of the velocity. For example, fourth- and sixth-order approximations are given by
\begin{align*}
\vvv_h^{n,4} &:= \vvv_h^{n,2} + \Delta t^2\left(\frac{1}{6}L_h\vvv_h^{n,2} - \frac{1}{6}\partial_t\vvct{f_h}(t_n)\right), \\
\vvv_h^{n,6} &:= \vvv_h^{n,4} + \Delta t^4\left(\frac{7}{360}L_h^2\vvv_h^{n,2} - \frac{7}{360}L_h\partial_t \vvct{f_h}(t_n) - \frac{1}{120}{\partial_t^3 \vvct{f_h}(t_n)} \right).
\end{align*}
Since we are only interested in the order-$2p$ accurate velocity field at the final time slot $n=N_T:=T/\Delta t$, we only need to do this computation once.

For stability, the time step size should be sufficiently small:
\begin{align*}
\Delta t \leq \Delta t_{\max} = \sqrt{\frac{c_p}{\sigma_{\max}(L_h)}},
\end{align*}
where $c_p=4,12,7.57$ \cite{geevers18a}, for $p=1,2,3$, respectively, and $\sigma_{\max}(L_h)$ denotes the largest eigenvalue of $L_h$. The largest eigenvalue can be bounded by the largest eigenvalues of the element matrices \cite{irons71}:
\begin{align*}
\sigma_{\max}(L_h) &\leq \max_{e\in\mathcal{T}_h} \;\sigma_{\max}(M_e^{-1}A_e) ,
\end{align*}
where $M_e, A_e$ denote the mass- and stiffness matrix of element $e$, respectively. In the numerical tests, we will always set $\Delta t = 0.9\Delta t_{\max}$, with $\sigma_{\max}(L_h)$ estimated using the above.

When pre-processing, the initial discrete values $\vvct{u_{0,h}}=\vvu_h^0$ and $\vvct{v_{0,h}}=\vvct{\partial_tu}_h^0$ are computed by
\begin{align*}
\vvct{\partial_t^qu}_h^0 = \vvct{\partial_t^{q}u|_{t=0}}, \qquad \vvct{\partial_t^{q+1}u}_h^0 =  \vvct{\partial_t^{q+1}u|_{t=0}},
\end{align*}
and by recursively solving
\begin{align}
\label{eq:wave4b}
\vvct{\partial_t^{k}u}_h^0 =  L_h^{-1}\left(-\vvct{\partial_t^{k+2}u}_h^0 + {\partial_t^k \vvct{f_h} (t_n)} \right), &&k = q-1,q-2,\dots,1,0.
\end{align}
The initial values $\partial_t^{q}u|_{t=0}$ and $\partial_t^{q+1}u|_{t=0}$ can be obtained from $u_0$ and $v_0$ by computing \eqref{eq:wave2aa} for $k=0,1,\dots,q-1$.

When post-processing, we need to apply operators of the form $r_k^{(-1)}$ and therefore apply the operator $\Lc^{-1}f$, which maps a function $f$ to the exact solution of the elliptic problem given in \eqref{eq:ell}. Since it is usually not possible to solve the elliptic problem exactly, we approximate $\Lc^{-1}$ by $\Lhs^{-1}$, which maps a function $f$ to a finite element approximation of the elliptic problem. For this finite element approximation, we use the same mesh as for the time-stepping but with a degree-$(p+q)$ finite element space.

Let $U_{h*}$ denote the higher-degree finite element space, $\mathcal{Q}_{h*}=\{\vx_{i*}\}_{i=1}^{N_*}$ the corresponding nodes, $M_*$, $A_*$, and $L_{h*}:=M^{-1}_*A_*$ the corresponding matrices, and $\vvct{f}_*$ the vector of values of a function $f$ at the nodes $\vx_{i*}$. Also, let $P\in\mathbb{R}^{N_*\times N}$, defined by $P_{ij}:=w_j(\vx_{i*})$, be the matrix that maps the degrees of freedom of the degree-$p$ finite element space to the degrees of freedom of the degree-$(p+q)$ finite element space. The post-processed wave field $\vvu_{h*}^n$ and velocity field $\vvv_{h*}^n:=\vvct{\partial_tu}_{h*}^n$ can then be computed by
\begin{align*}
\vvct{\partial_t^{q}u}_{h*}^n = P\vvct{\partial_t^{q}u}_h^n, \qquad \vvct{\partial_t^{q+1}u}_{h*}^n =  P\vvct{\partial_t^{q+1}u}_{h}^n,
\end{align*}
and by recursive solving
\begin{align}
\label{eq:wave4c}
\vvct{\partial_t^{k}u}_{h*}^n =  L_{h*}^{-1}\left(-\vvct{\partial_t^{k+2}u}_{h*}^n + {\partial_t^k\vvct{f_h}_*(t_n)}\right), && k = q-1,q-2,\dots,1,0.
\end{align}
The discrete time derivatives $\vvct{\partial_t^{q}u}_h^n$ and $\vvct{\partial_t^{q+1}u}_h^n$ can be obtained from $\vvu_h^n$ and $\vvct{\partial_tu}_h^n:=\vvv_h^{n,2p}$ by recursively computing \eqref{eq:wave4a} for $k=0,1,\dots,q-1$. Again, we only need to do these post-processing steps at the final time slot $n=N_T$.

\subsection{Quadrature rules and mass lumping}
To compute the spatial integrals, we use a quadrature rule that consists of a set of points $\hat{\mathcal{Q}}_M$ on the reference element and a set of weights $\{\omega_{\hat\vx}\}_{\hat\vx\in\hat{\mathcal{Q}}_M}$. The integral over the reference element is then approximated as follows:
\begin{align*}
\int_{\hat e} f(\hat\vx) \;d\hat x &\approx \sum_{\hat\vx\in\hat{\mathcal{Q}}_M} \omega_{\hat\vx} f(\hat\vx).
\end{align*}
We can write integrals over the physical element as integrals over the reference element using the relation
\begin{align*}
\int_e f(\vx) \;dx &= \int_{\hat e} (f\circ\phi_e)(\hat\vx)|\ten{J}_e(\hat\vx)| \;d\hat x,
\end{align*}
where $\ten{J}_e:=\hat\nabla\phi_e$ denotes the Jacobian of the element mapping, $\hat\nabla$ is the gradient operator of the reference space, and $|\ten{J}_e|:=|\mathrm{det}(\ten{J}_e)|$. 

A major drawback of the classical finite element method for the wave equation is that the mass matrix is not strictly diagonal. Since time stepping requires  computing terms of the form $L_h\vvu = M^{-1}A\vvu$ at each time step, a non-diagonal mass matrix would require solving a large system of equations at each time step. In practice, the mass matrix is therefore lumped into a diagonal matrix, so that the system of equations becomes trivial to solve. A lumped mass matrix can be obtained by placing the nodes of the basis functions $\hat{\mathcal{Q}}_I$ at the quadrature points for the mass matrix, so by setting $\hat{\mathcal{Q}}_I=\hat{\mathcal{Q}}_M$. To obtain 1D mass-lumped elements, we can use Gauss-Lobatto points. This can be extended to quadrilateral and hexahedral mass-lumped elements by using tensor-product basis functions. The resulting scheme is known as the spectral element method \cite{patera84,seriani94,komatitsch98}. For linear triangular and tetrahedral elements, we can place the nodes at the vertices and use a Newton--Cotes integration rule. For higher-order triangular and tetrahedral elements, however, we need to enrich the element space with higher-degree bubble functions and use special quadrature rules to maintain stability and accuracy after mass-lumping \cite{cohen95,cohen01,mulder96,chin99,mulder13,liu17,cui17,geevers18b}. 

\begin{figure}[h]
\centering
\begin{subfigure}[b]{0.3\textwidth}
  \includegraphics[width=\textwidth]{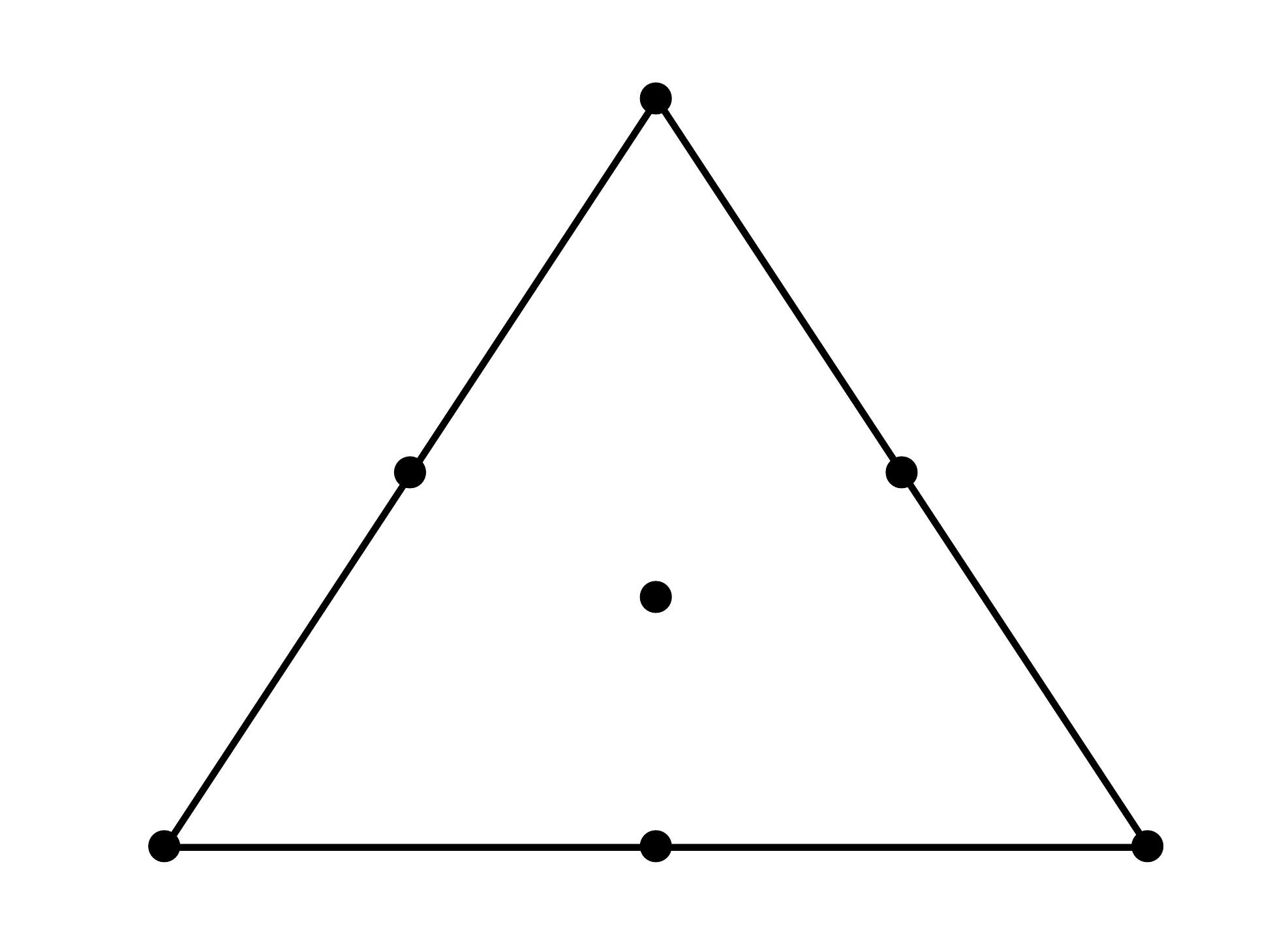}
\end{subfigure} \,
\begin{subfigure}[b]{0.3\textwidth}
  \includegraphics[width=\textwidth]{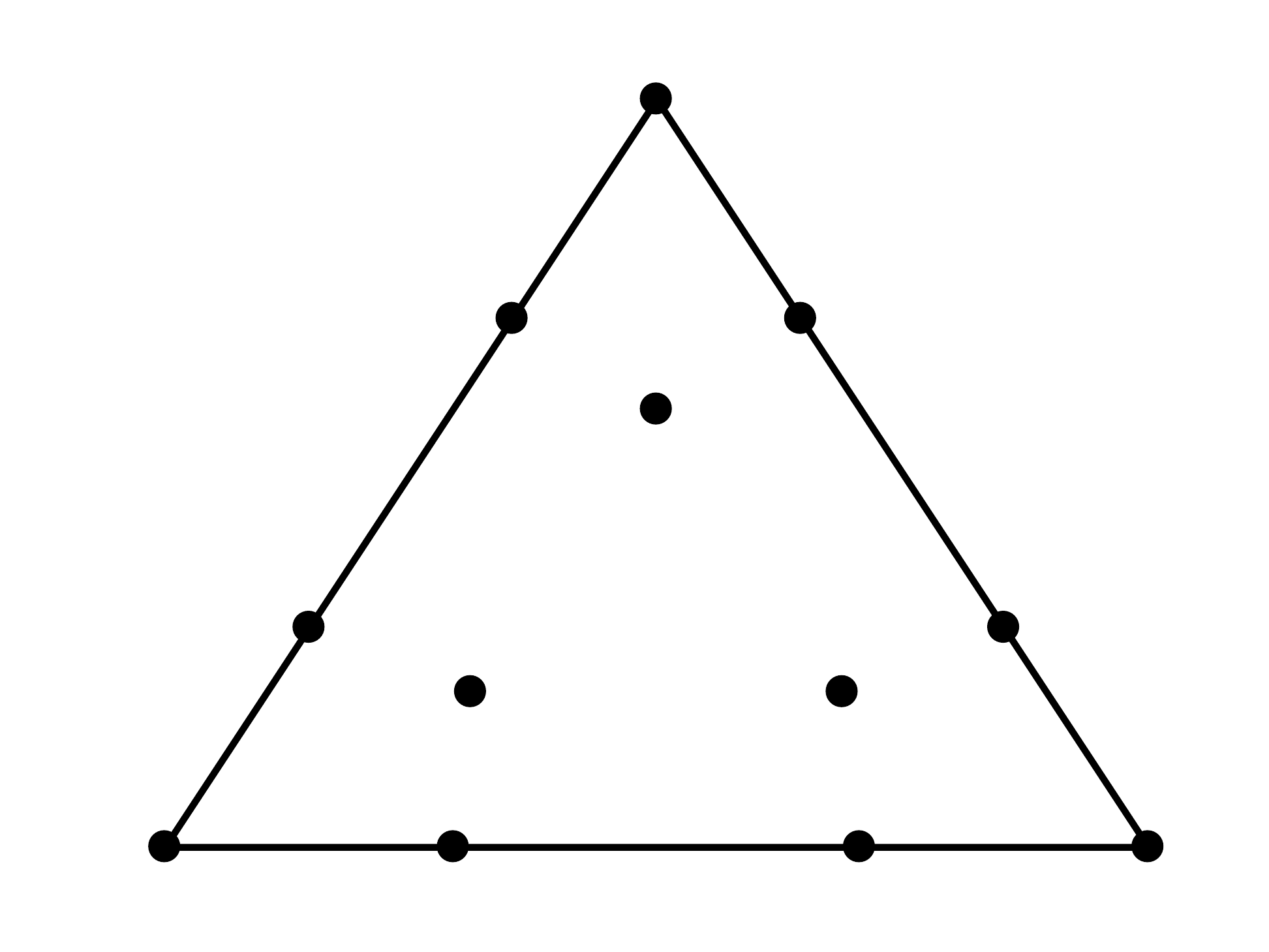}
\end{subfigure}
\caption{Mass-lumped degree-2 (left) and degree-3 (right) triangular element.}
\label{fig:MLTRI}
\end{figure}

In this paper, we test using the 1D spectral elements, the linear mass-lumped triangular element, and the quadratic and cubic mass-lumped triangular element presented in \cite{cohen95}. The element space of the quadratic mass-lumped triangular element is given by
\begin{align*}
\hat U &:= \mathcal{P}_2(\hat e) \oplus \{b\}.
\end{align*}
where $b:=\hat{x}_1\hat{x}_2\hat{x}_3$ is the degree-3 bubble function, with $\hat{x}_i$ the barycentric coordinates. The nodes of this element are placed at the 3 vertices, the midpoint of the 3 edges, and the centre of the triangle. The space of the cubic mass-lumped triangular element is given by
\begin{align*}
\hat U &:= \mathcal{P}_3(\hat e) \oplus \big(\{b\}\otimes\mathcal{P}_1(\hat e)\big).
\end{align*}
The nodes of this element are placed at the three vertices, the 6 points on the edges with barycentric coordinates $\alpha$, $1-\alpha$, and $0$, and at the three interior points with barycentric coordinates $\beta$, $\beta$, and $1-2\beta$, where
\begin{align*}
\alpha \approx  0.2934695559090402, \qquad
\beta   \approx  0.2073451756635909.
\end{align*}
An illustration of these elements is given in Figure \ref{fig:MLTRI}.

\subsection{Test with sharp contrasts in material parameters}
First, we test the finite element method with and without pre- and post-processing on a 1D periodic domain with sharp contrasts in material parameters and using a mesh with sharp contrasts in the element size. The test is similar to those in \cite{ryan05}, except that, unlike in \cite{ryan05}, super-convergence is observed on the entire domain, also near the material interfaces and sharp contrasts in the element size.

Let $\Omega:=(0,5)$ be the periodic domain and let the parameters $\rho$ and $c$ be given by
\begin{align*}
\rho(x) = \begin{cases}
1, & x\in(0,1), \\
\frac14, & x\in(1,5), \\ 
\end{cases}
\qquad
c(x) = \begin{cases}
1, & x\in(0,1), \\
4, & x\in(1,5). \\ 
\end{cases}
\end{align*} 
The exact solution is given by the travelling wave $u(x,t):=\sin(2\pi(X(x)-t))$, where
\begin{align}
\label{eq:defX}
X(x) &= \begin{cases}
x, & x\in(0,1),\\
\frac14x+\frac34, & x\in(1,5).\\
\end{cases}
\end{align}%
The simulated time interval is $(0,T)$ with $T=10$ and the initial conditions are obtained from the exact solution. 

\begin{figure}[h]
\includegraphics[width=\textwidth]{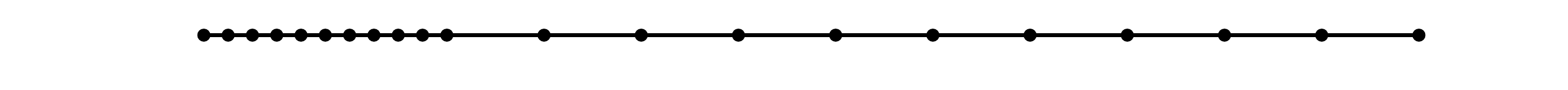}
\caption{Mesh with $N=10$. Dots represent the vertices.}
\label{fig:1Dmesh}
\end{figure}

We solve the wave equation numerically using degree-$p$ spectral elements for the time-stepping scheme and degree-$2p$ spectral elements for the order-$q$ improving pre- and post-processing steps. In case $q=0$, no pre- and post-processing is applied. We use $N$ elements per wavelength, so $\Delta x = \frac{1}{N}$ in $(0,1)$ and $\Delta x = \frac{4}{N}$ in $(1,5)$ and therefore we have a sharp contrast in mesh size at $x=0$ and $x=1$. An illustration of the mesh is given in Figure \ref{fig:1Dmesh}

The relative errors in the weighted energy norm and weighted $L^2$-norm are defined as follows:
\begin{align*}
e_0 &:= \frac{\big\| \rho^{1/2}\big(u(T)-u_{h*}^{N_T}\big)\big\|_0}{\|\rho^{1/2}u(T)\|_0}, \\ 
e_E &:= \frac{\big\|\rho^{1/2}\big(\partial_tu(T)-v_{h*}^{N_T}\big)\big\|_0 + \big\|c^{1/2}\nabla\big(u(T)-u_{h*}^{N_T}\big)\big\|_0}{\|\rho^{1/2}\partial_tu(T)\|_0 + \|c^{1/2}\nabla u(T)\|_0}.
\end{align*}

\begin{table}[h]
\caption{Results for the 1D problem showing relative errors in the energy norm of the degree-$p$ spectral element method with ($q=p$) and without ($q=0$) order-$q$ improving pre- and post-processing and using $N$ elements per wavelength.}
\label{tab:err1Dp}
\begin{center}
\begin{tabular}{c r |r r r |r r r}
		&		& \multicolumn{3}{c}{$q=0$} 	& \multicolumn{3}{|c}{$q=p$} 	\\ 
		& N 		& $e_{E}$ 	& ratio	& order 	& $e_{E}$ 	& ratio	& order	\\ \hline
		& 20		& 1.66e-01	& 		& 		& 1.54e-01	& 		&  		\\  
$p=1$	& 40		& 4.90e-02	& 3.39	& 1.76	& 3.86e-02	& 4.00	& 2.00	\\ 
		& 80		& 1.71e-02	& 2.86	& 1.52	& 9.65e-03	& 4.00	& 2.00	\\ \hline
		&5		& 7.23e-02	& 		&		& 6.02e-02	& 		&		\\ 
$p=2$	&10		& 9.68e-03	& 7.47	& 2.90	& 3.63e-03	& 16.6	& 4.05	\\  
		& 20		& 2.00e-03	& 4.85	& 2.28	& 2.25e-04	& 16.1	& 4.01	\\ \hline
		& 5 		& 3.56e-03	& 		&		& 4.12e-04	& 		&		\\ 
$p=3$	& 10  	& 4.19e-04	& 8.50	& 3.09	& 6.41e-06	& 64.2	& 6.01	\\  
		& 20		& 5.04e-05	& 8.31	& 3.05	& 1.00e-07	& 64.0	& 6.00		
\end{tabular}
\end{center}
\end{table}

\begin{table}[h]
\caption{Results for the 1D problem showing relative errors in the $L^2$- and energy norm of the degree-3 spectral element method with order-$q$ improving pre- and post-processing and using $N$ elements per wavelength.}
\label{tab:err1Dq}
\begin{center}
\begin{tabular}{c r |r r r |r r r}
		&  		& \multicolumn{6}{c}{$p=3$} 	\\ 
		& N		& $e_{0}$ 		& ratio	& order 	& $e_{E}$ 	& ratio	& order	\\ \hline
		& 5		& 4.14e-04	& 		&		& 6.58e-04	& 		&		\\ 
$q=1$	&10  		& 6.42e-06	& 64.4	& 6.01	& 3.12e-05	& 21.1	& 4.40	\\  
		& 20		& 1.00e-07	& 64.0	& 6.00	& 1.70e-06	& 18.4	& 4.20	\\ \hline
		& 5		& 4.12e-04	& 		&		& 4.25e-04	& 		&		\\ 
$q=2$	& 10  	& 6.43e-06	& 64.1	& 6.00	& 7.35e-06	& 57.7	& 5.85	\\  
		& 20		& 1.00e-07	& 64.1 	& 6.00	& 1.41e-07	& 52.1	& 5.70		
\end{tabular}
\end{center}
\end{table}

Results for different elements and different pre/post-processing schemes are given in Tables \ref{tab:err1Dp} and \ref{tab:err1Dq}. The errors are computed using the quadrature rule of the degree-$2p$ spectral element method. The results confirm that the convergence rate is of order $q+p$ in the energy norm and $q+p+1$ in the $L^2$-norm. In case $p=3$ and $q=1$, this scheme even seems to converge with order 6 in the $L^2$-norm. However, this higher convergence rate only appears in the 1D case and not in the 2D case as we will show in the next subsection.

\begin{figure}[h]
\begin{center}
\includegraphics[width=0.5\textwidth]{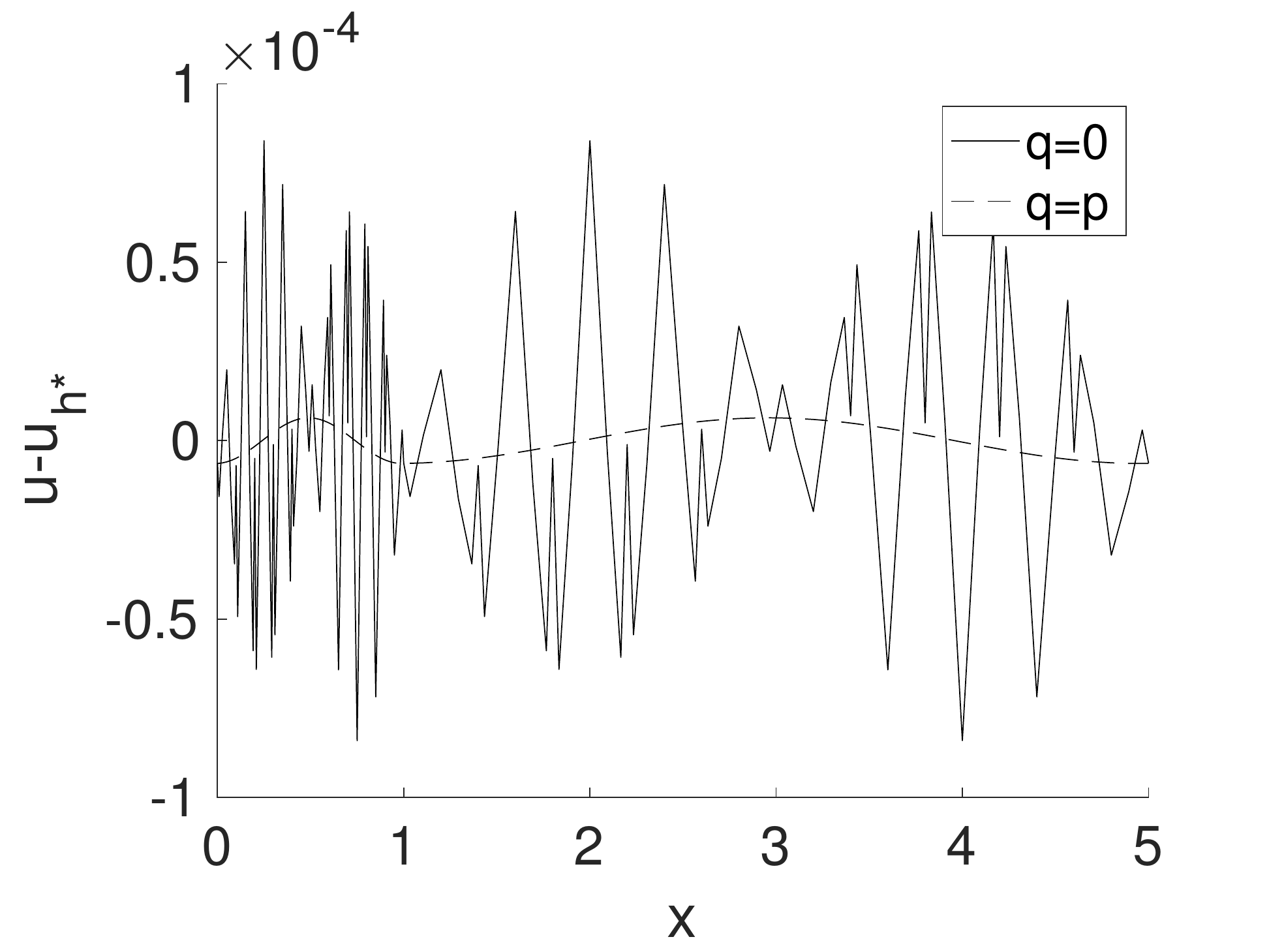}
\caption{Error $u-u_{h*}$ at final time $T$ of the unprocessed (solid) and post-processed (dashed, $q=p$) finite element solution for the 1D problem using $N=10$ and $p=3$.}
\label{fig:err1D}
\end{center}
\end{figure}

Figure \ref{fig:err1D} also shows the error of the unprocessed and post-processed finite element approximation for $N=10$ and $p=3$. The figure illustrates that the error of the unprocessed approximation is highly oscillatory, while the error of the post-processed solution is much smoother.

Since there are strong contrasts in the material parameters, it is not immediately obvious that the regularity assumption given in \eqref{eq:reg} holds. However, by using the piecewise-linear coordinate transformation $x\rightarrow X$, given in \eqref{eq:defX}, the problem can be rewritten as a wave propagation problem for a periodic domain with constant material parameters.

\subsection{Test using unstructured triangular meshes}
\label{sec:testSquare}

\begin{figure}[h]
\centering
\begin{subfigure}[b]{0.3\textwidth}
  \includegraphics[width=\textwidth]{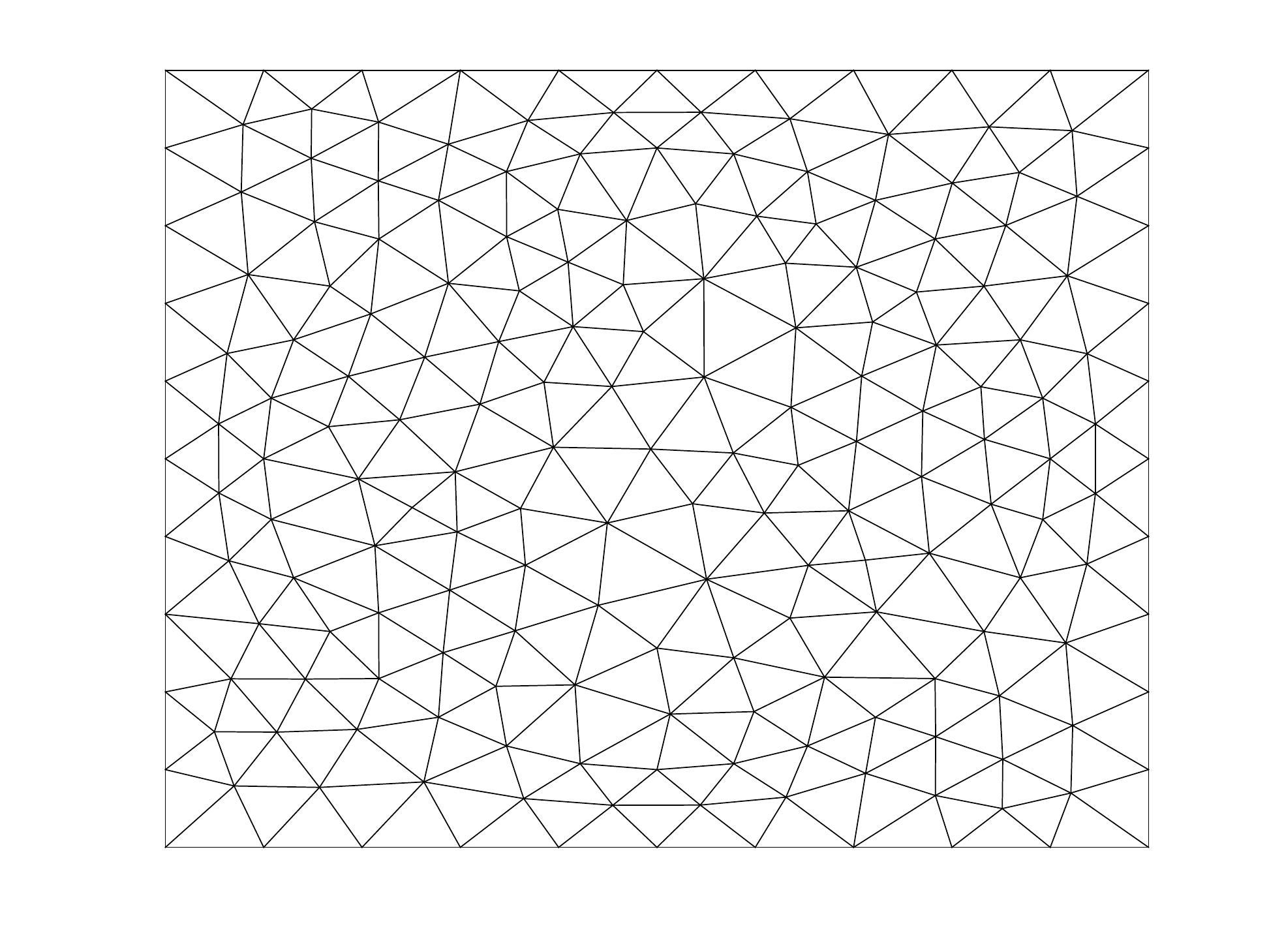}
  \caption*{mesh 1}
  \label{fig:meshc1}
\end{subfigure} \,
\begin{subfigure}[b]{0.3\textwidth}
  \includegraphics[width=\textwidth]{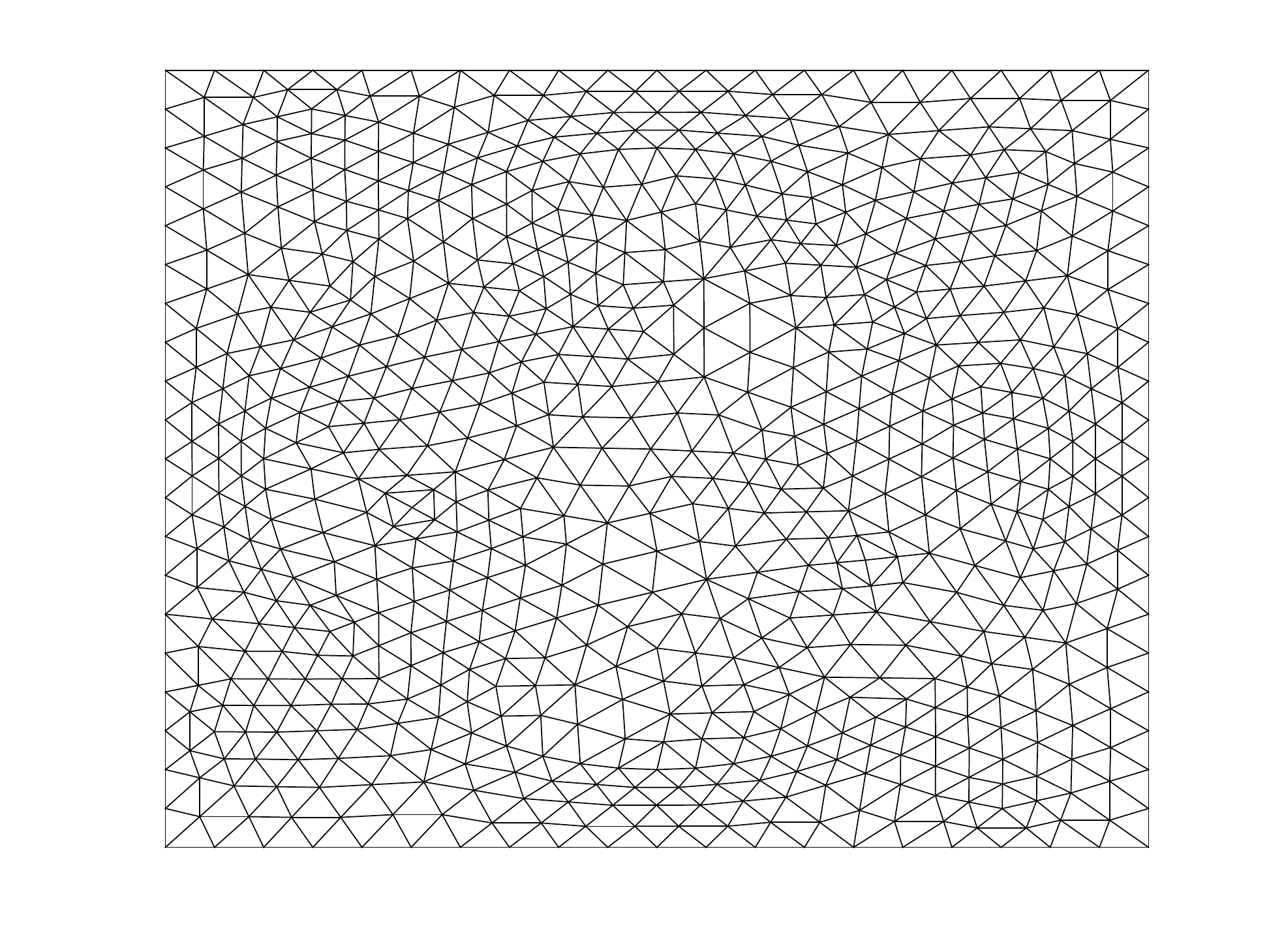}
  \caption*{mesh 2}
  \label{fig:meshc2}
\end{subfigure} \,
\begin{subfigure}[b]{0.3\textwidth}
  \includegraphics[width=\textwidth]{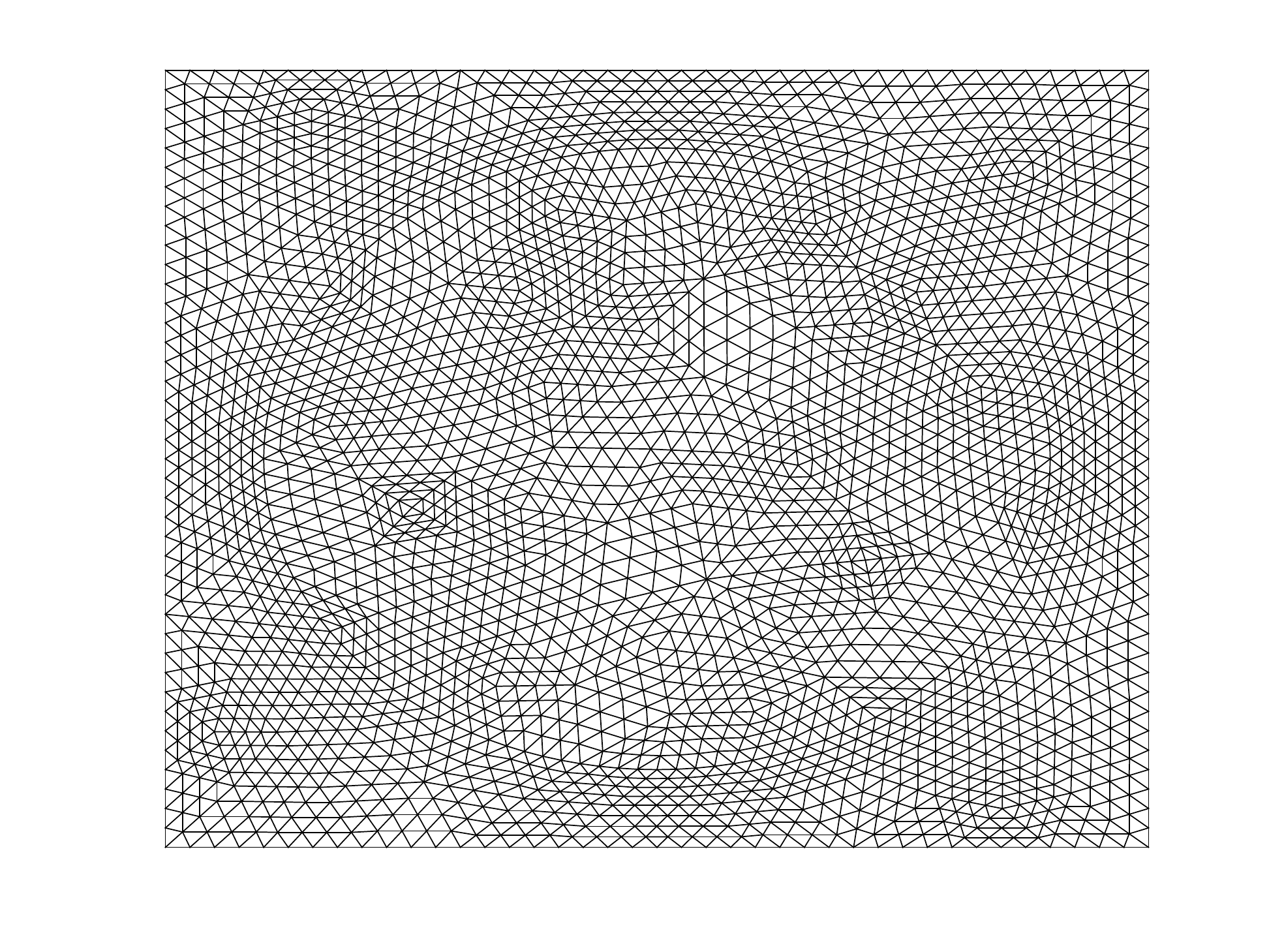}
  \caption*{mesh 3}
  \label{fig:meshc3}
\end{subfigure}
\caption{Three unstructured triangular meshes of a square domain, where mesh 2 is a refinement of mesh 1 and mesh 3 is a refinement of mesh 2.}
\label{fig:meshc}
\end{figure}

Next, we test the pre- and post-processing methods on a squared domain with non-constant material parameters and zero Dirichlet boundary conditions using unstructured triangular meshes. The test is similar to those in \cite{mirzaee13}, except that, unlike in \cite{mirzaee13}, super-convergence is still clearly observed when using unstructured meshes. 

Let $\Omega:=[0,1]^2$ be the spatial domain and $(0,T)$ the time domain with final time $T=50$. Introduce distortions of the Cartesian coordinates, given by
\begin{align}
\label{eq:defXY}
X(x) := x + \frac{1}{10}\sin(\pi x), \qquad Y(y) := y + \frac{1}{10}\sin(\pi y),
\end{align}
and define the spatial parameters
\begin{align*}
\rho(x,y) := \frac{\dot{X}^2(x) + \dot{Y}^2(y)}{2\dot{X}(x)\dot{Y}(y)}, \qquad c(x,y):=\frac{1}{\dot{X}(x)\dot{Y}(y)},
\end{align*}
where $\dot{X}$ and $\dot{Y}$ denote the derivatives of $X$ and $Y$, respectively. The wave propagation speed $\sqrt{c/\rho}$ then takes values between $(1+\frac{1}{10}\pi)^{-1}\approx 0.76$ and $(1-\frac{1}{10}\pi)^{-1}\approx 1.46$. The exact solution is given by
\begin{align*}
u(x,y,t) &:= \sin(2\pi X(x))\sin(2\pi Y(y))\cos(2\pi t + \phi),
\end{align*}
with source term
\begin{align*}
f(x,y,t) &:= 4\pi^2\sin(2\pi X(x))\sin(2\pi Y(y))\cos(2\pi t + \phi),
\end{align*}
where $\phi:=0.5$ is an arbitrary phase shift added to avoid the initial displacement or velocity field to be completely zero. The initial conditions are obtained from the exact solution.

\begin{table}[h]
\caption{Results for the square-domain problem showing relative errors in the energy norm of the degree-$p$ mass-lumped triangular element method with ($q=p$) and without ($q=0$) order-$q$ improving pre- and post-processing.}
\label{tab:err2Dp}
\begin{center}
\begin{tabular}{c r |r r r |r r r}
		&		& \multicolumn{3}{c}{$q=0$} 	& \multicolumn{3}{|c}{$q=p$} 	\\ 
		& mesh 		& $e_{E}$ 	& ratio	& order 	& $e_{E}$ 	& ratio	& order	\\ \hline
		& 3		& 4.02e-02	& 		& 		& 4.24e-03	& 		&  		\\  
$p=1$	& 4		& 1.92e-02	& 2.09	& 1.07	& 1.09e-03	& 3.88	& 1.96	\\ 
		& 5		& 9.33e-03	& 2.06	& 1.04	& 2.70e-04	& 4.05	& 2.02	\\ \hline
		& 1		& 1.52e-02	& 		&		& 4.64e-04	& 		&		\\ 
$p=2$	& 2		& 3.80e-03	& 4.01	& 2.01 	& 2.89e-05	& 16.1	& 4.01	\\  
		& 3		& 9.46e-04	& 4.01	& 2.00	& 1.82e-06	& 15.9	& 3.99	\\ \hline
		& 1 		& 1.00e-03	& 		&		& 1.16e-06	& 		&		\\ 
$p=3$	& 2	  	& 1.26e-04	& 7.92	& 2.99	& 1.75e-08	& 66.1	& 6.05	\\  
		& 3		& 1.58e-05	& 8.00	& 3.00	& 2.97e-10	& 59.0	& 5.88					
\end{tabular}
\end{center}
\end{table}

\begin{table}[h]
\caption{Results for the square-domain problem showing relative errors in the $L^2$- and energy norm of the degree-3 mass-lumped triangular element method with order-$q$ improving pre- and post-processing.}
\label{tab:err2Dq}
\begin{center}
\begin{tabular}{c r |r r r |r r r}
		&  		& \multicolumn{6}{c}{$p=3$} 	\\ 
		& mesh		& $e_{0}$ 		& ratio	& order 	& $e_{E}$ 	& ratio	& order	\\ \hline
		& 1		& 1.68e-06	& 		&		& 3.24e-05	& 		&		\\ 
$q=1$	& 2  		& 4.06e-08	& 41.4	& 5.37	& 1.99e-06	& 16.3	& 4.03	\\  
		& 3		& 1.11e-09	& 36.5	& 5.19	& 1.22e-07	& 16.3	& 4.02	\\ \hline
		& 1		& 1.27e-06	& 		&		& 5.09e-06	& 		&		\\ 
$q=2$	& 2  		& 1.92e-08	& 65.8	& 6.04	& 1.51e-07	& 33.7	& 5.07	\\  
		& 3		& 3.23e-10	& 59.6 	& 5.90	& 4.56e-09	& 33.2	& 5.05		
\end{tabular}
\end{center}
\end{table}

We consider 5 different meshes where each subsequent mesh has a resolution twice as fine as the previous mesh. An illustration of the first three meshes is given in Figure \ref{fig:meshc}. We test the standard linear mass-lumped triangular element and the quadratic and cubic mass-lumped triangular elements presented in \cite{cohen95}. For post-processing, we use standard degree-$2p$ Lagrangian elements combined with a degree-$4p$ accurate quadrature rule taken from \cite{zhang09} for evaluating the integrals. This last quadrature rule is also used to evaluate the errors.

\begin{figure}[h]
\centering
\begin{subfigure}[b]{0.4\textwidth}
  \includegraphics[width=\textwidth]{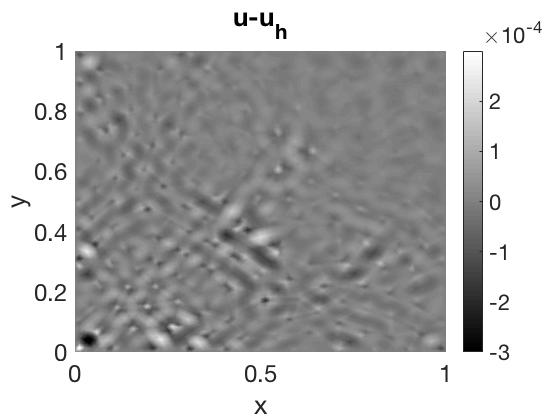}
\end{subfigure} \,
\begin{subfigure}[b]{0.4\textwidth}
  \includegraphics[width=\textwidth]{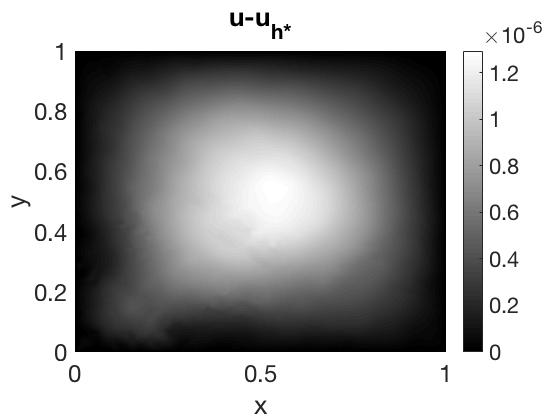}
\end{subfigure}
\caption{Error $u-u_{h*}$ at final time $T$ of the unprocessed (left) and post-processed finite element solution (right, $q=p$) for the square-domain problem using mesh 1 and $p=3$.}
\label{fig:err2D}
\end{figure}

Results for different elements and different pre/post-processing schemes are given in Tables \ref{tab:err2Dp} and \ref{tab:err2Dq}. In all cases, the convergence rate is of order $q+p$ in the energy norm and $q+p+1$ in the $L^2$-norm. Figure \ref{fig:err2D} also shows the error of the unprocessed and post-processed finite element approximation for mesh 1 and $p=3$. The figure illustrates again that the error of the unprocessed approximation is highly oscillatory, while the error of the post-processed solution is relatively smooth.

Since the boundary of the squared domain is not smooth, it is again not immediately obvious that the regularity assumption given in \eqref{eq:reg} holds. However, by using the smooth coordinate transformations $(x,y)\rightarrow (X,Y)$, given in \eqref{eq:defXY}, the problem can be rewritten as a wave propagation problem on a unit square with constant material parameters. The regularity assumption \eqref{eq:reg} then follows from Fourier analysis.

\subsection{Test on a domain with a curved boundary}
\label{sec:testCircle}
Finally, we test the pre- and post-processing algorithms on a circular domain with zero Dirichlet boundary conditions. Let $\Omega$ be the unit circle , let $(0,T)$ be the time domain with $T=50$, and let $\rho=c=1$ be constant. The exact solution is given by
\begin{align*}
u(r,\theta,t) &:= J_2(\kappa r)\cos(2\theta)\cos(\kappa t + \phi),
\end{align*}
where $r\in[0,1)$ and $\theta\in[0,2\pi)$ are the polar coordinates, $\phi:=0.5$ is an arbitrary phase shift, $J_2$ is the Bessel function of order 2, and $\kappa\approx 5.135622301840683$ is the first positive root of $J_2$. The initial conditions are obtained from the exact solution.

\begin{figure}[h]
\centering
\begin{subfigure}[b]{0.3\textwidth}
  \includegraphics[width=\textwidth]{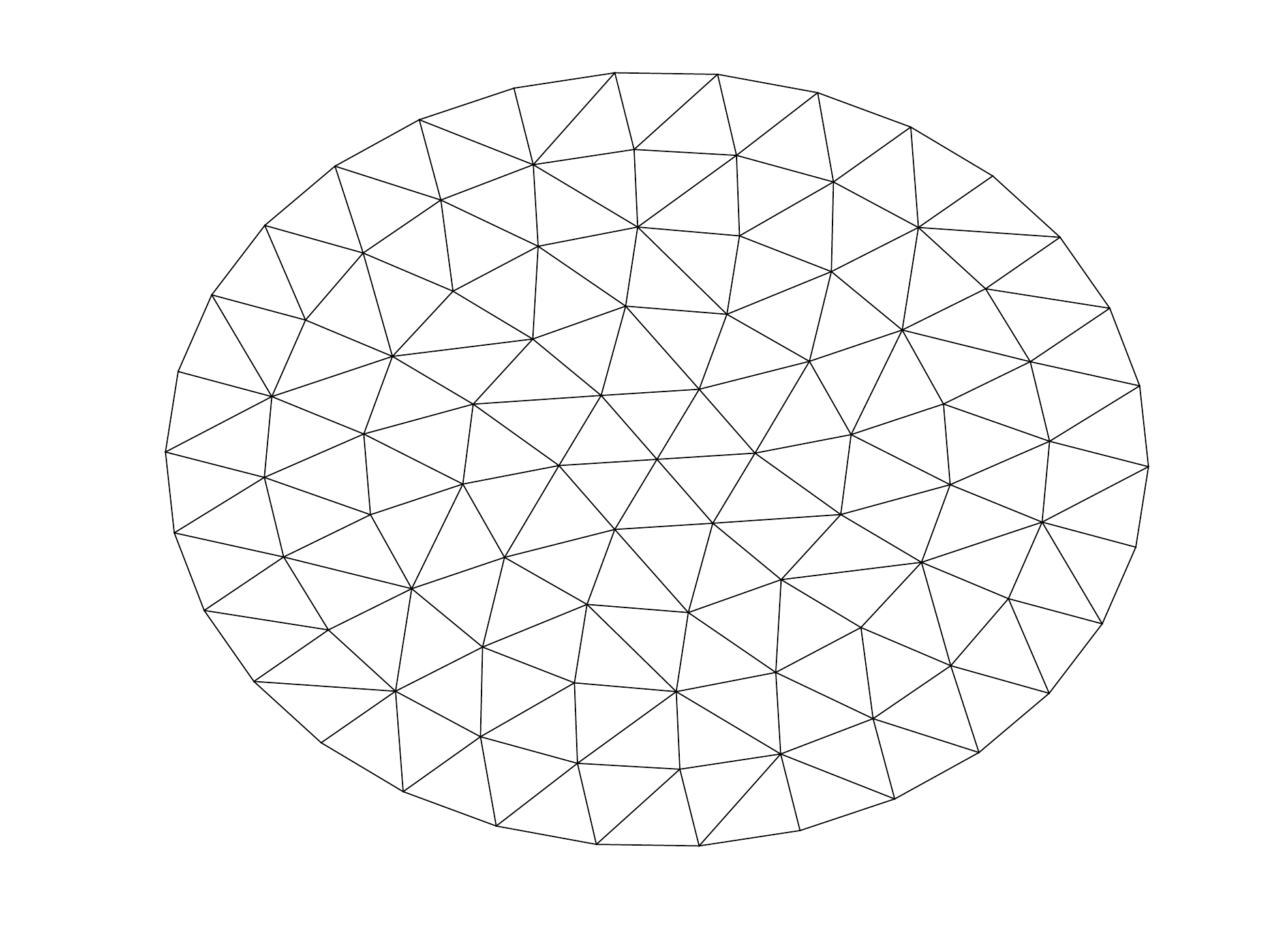}
  \caption*{mesh 1}
  \label{fig:mesh1}
\end{subfigure} \,
\begin{subfigure}[b]{0.3\textwidth}
  \includegraphics[width=\textwidth]{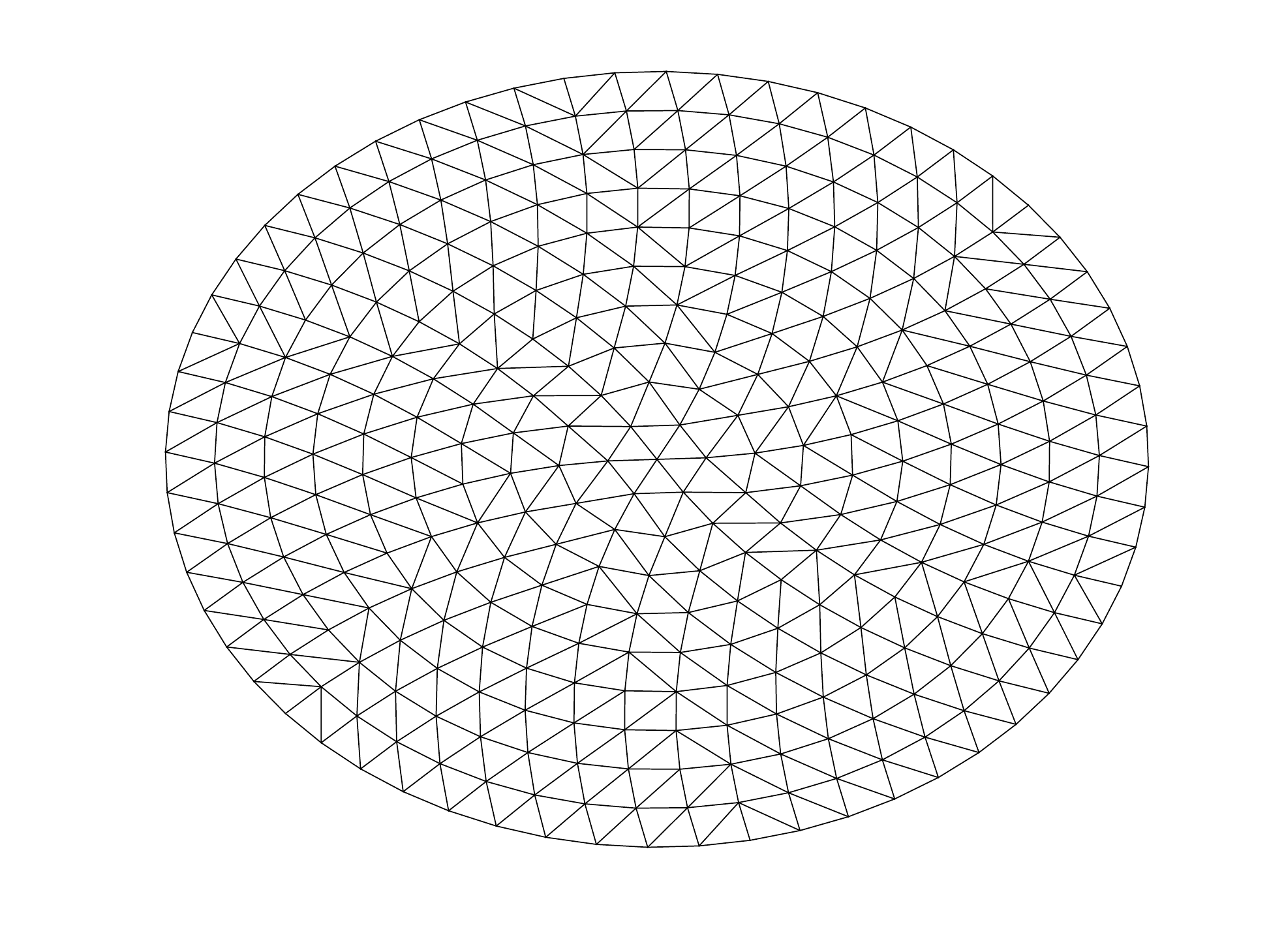}
  \caption*{mesh 2}
  \label{fig:mesh2}
\end{subfigure} \,
\begin{subfigure}[b]{0.3\textwidth}
  \includegraphics[width=\textwidth]{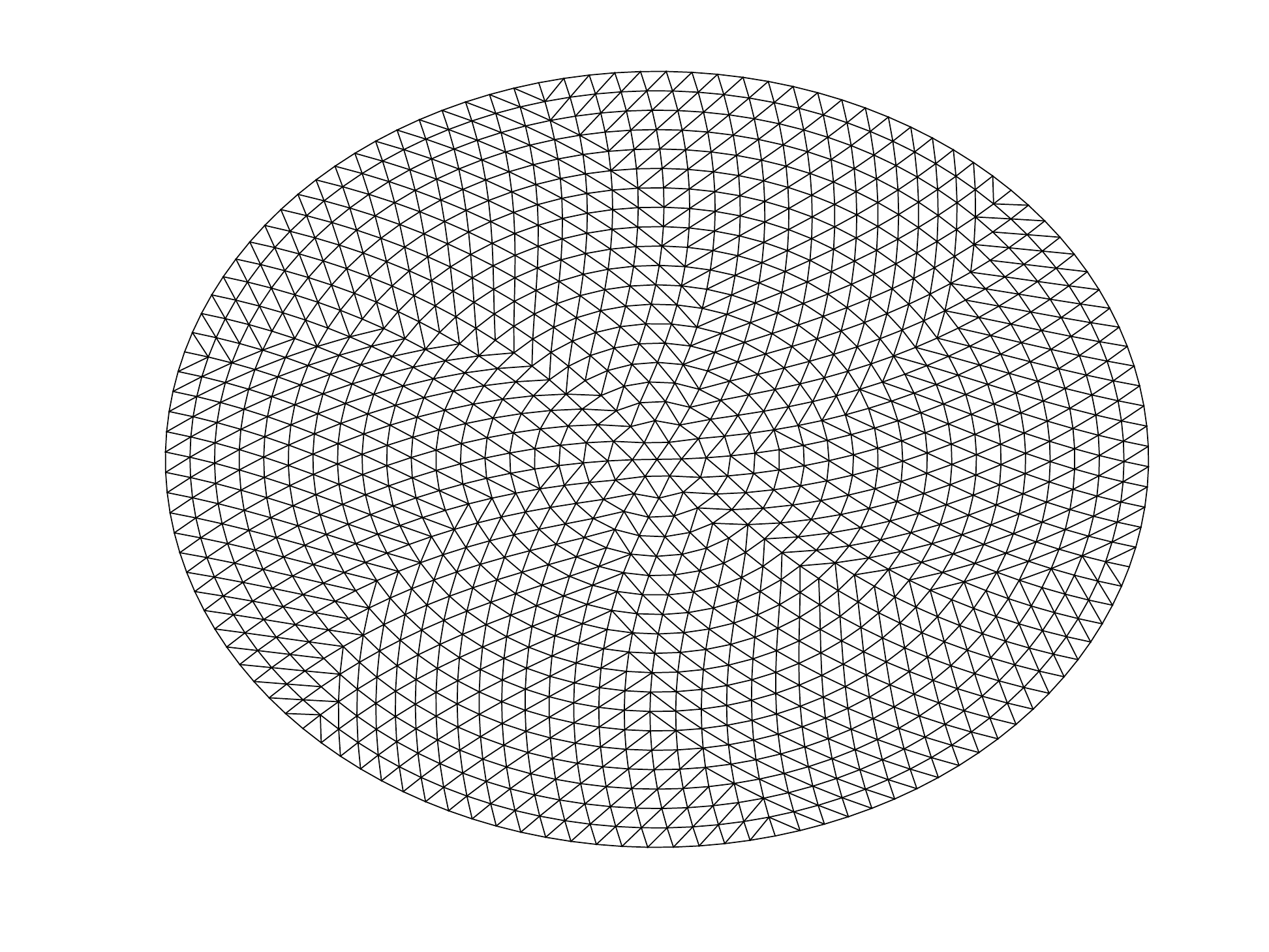}
  \caption*{mesh 3}
  \label{fig:mesh3}
\end{subfigure}
\caption{Three unstructured triangular meshes of a circular domain, where mesh 2 is a refinement of mesh 1 and mesh 3 is a refinement of mesh 2.}
\label{fig:mesh}
\end{figure}

\begin{table}[h]
\caption{Results for the circle-domain problem showing relative errors in the energy norm of the degree-$p$ mass-lumped triangular element method with ($q=p$) and without ($q=0$) order-$q$ improving pre- and post-processing.}
\label{tab:err2Dcp}
\begin{center}
\begin{tabular}{c r |r r r |r r r}
		&		& \multicolumn{3}{c}{$q=0$} 	& \multicolumn{3}{|c}{$q=p$} 	\\ 
		& mesh 	& $e_{E}$ 	& ratio	& order 	& $e_{E}$ 	& ratio	& order	\\ \hline
		& 3		& 4.21e-01	& 		& 		& 4.20e-01	& 		&  		\\  
$p=1$	& 4		& 1.09e-01	& 3.85	& 1.95	& 1.00e-01	& 4.20	& 2.07	\\ 
		& 5		& 3.28e-02	& 3.32	& 1.73	& 2.45e-02	& 4.07	& 2.03	\\ \hline
		& 1		& 4.40e-02	& 		&		& 2.28e-02	& 		&		\\ 
$p=2$	& 2		& 7.90e-03	& 5.57	& 2.48	& 1.27e-03	& 17.9	& 4.16	\\  
		& 3		& 1.87e-03	& 4.23	& 2.08	& 7.35e-05	& 17.3	& 4.11	\\ \hline
		& 1 		& 2.68e-03	& 		&		& 1.75e-04	& 		&		\\ 
$p=3$	& 2	  	& 3.35e-04	& 8.01	& 3.00	& 2.22e-06	& 78.8	& 6.30	\\  
		& 3		& 4.20e-05	& 7.98	& 3.00	& 2.96e-08	& 75.0	& 6.23				
\end{tabular}
\end{center}
\end{table}

We test again on 5 different meshes where each subsequent mesh has a resolution twice as fine as the previous mesh. An illustration of the first three meshes is given in Figure \ref{fig:mesh}. To obtain a higher accuracy, we replace the straight elements at the boundary by curved elements. To parametrise these curved elements, we use degree-$2p$ hyperparametric element mappings. First, we place $2p+1$ nodes at equal intervals on every edge of a boundary element, with the outer nodes lying on the vertices. We do something similar for the edges of the reference element.  We then rescale the coordinates of the nodes on boundary edges such that they lie on the boundary of the unit circle. Let $\{\hat{\vx}_i\}_{i=1}^{6p}$ denote the nodes at the boundary of the reference element, and let $\{\vx_i\}_{i=1}^{6p}$ denote the corresponding nodes at the boundary of a curved element. We then parameterise the curved element with the element mapping $\phi_e$ defined such that $\phi_e(\hat{\vx}_i)=\vx_i$ for $i=1,\dots,6p$. To obtain this element mapping, we interpolate using the following set of polynomials of degree $2p$ and less:
\begin{align*}
&\hat{x}_i, &&i\in\{1,2,3\}, \\
&\hat{x}_i\hat{x}_j(\hat{x}_i-\hat{x_j})^k, &&i,j\in\{1,2,3\}, i<j, 0\leq k\leq 2p-2,
\end{align*}
where $\hat{x}_1, \hat{x}_2, \hat{x}_3$ denote the barycentric coordinates of the reference element. 

For the time-stepping and pre- and post-processing, we use the same elements and quadrature rules as in the previous subsection. The results for different elements and different pre/post-processing schemes are given in Table \ref{tab:err2Dcp}. Again, the results confirm that using pre- and post-processing can result in a convergence rate of order $2p$ instead of order $p$ in the energy norm.

\subsection{Pre- and post-processing using an iterative method}
In the previous subsections, we applied pre- and post-processing in combination with a direct solver. In practice, the number of degrees of freedom might become too large to efficiently use a direct solver. We therefore also test the pre- and post-processing method using an iterative method. 

\begin{table}[h]
\caption{Results for the square-domain problem showing relative errors in the energy norm $e_E$ of the degree-$p$ mass-lumped triangular element method with order-$p$ improving pre- and post-processing using the conjugate gradient method with $N_{it}$ iterations. The number of time steps $ N_T$ is listed in the third column.}
\label{tab:err2Dpit}
\begin{center}
\begin{tabular}{c r r |r r r r r}
		& mesh 	& $N_{T}$		& $N_{it}=0$ 	& 10		& 100 	& 1000 	& $\infty$	\\ \hline
		& 3		&   6190	& 4.02e-02	& 5.79e-03	& 4.22e-03	& 4.24e-03	& 4.24e-03 	\\  
$p=1$	& 4		& 12342	& 1.92e-02	& 1.67e-03	& 1.09e-03	& 1.09e-03	& 1.09e-03	\\ 
		& 5		& 24727	& 9.33e-03	& 4.59e-04	& 2.85e-04	& 2.70e-04	& 2.70e-04	\\ \hline
		& 1		&   3586	& 1.52e-02	& 6.52e-04	& 4.62e-04	& 4.64e-04	& 4.64e-04	\\ 
$p=2$	& 2		&   7222 	& 3.80e-03	& 9.36e-05 	& 2.91e-05	& 2.89e-05	& 2.89e-05	\\  
		& 3		& 14529 	& 9.46e-04	& 1.40e-05	& 1.98e-06	& 1.82e-06	& 1.82e-06 	\\ \hline
		& 1 		&   7399 	& 1.00e-03	& 1.19e-04	& 2.91e-06	& 1.16e-06	& 1.16e-06	\\ 
$p=3$	& 2	  	& 14879 	& 1.25e-04	& 1.16e-05	& 4.55e-07	& 1.77e-08	& 1.75e-08 	\\  
		& 3		& 29827	& 1.58e-05	& 1.38e-06	& 3.92e-08	& 7.28e-10	& 2.97e-10					
\end{tabular}
\end{center}
\end{table}

We consider again the test on the heterogeneous squared domain using the unstructured triangular meshes of Section \ref{sec:testSquare}. For pre- and post-processing, we need to recursively compute $\vvct{\partial_t^k u}_{h}^0$ and $\vvct{\partial_t^k u}_{h*}^{N_T}$ using \eqref{eq:wave4b} and \eqref{eq:wave4c}, respectively, which requires solving linear systems of equations of the form $A\vvct{x} = \vvct{y}$ and $A_*\vvct{x}_* = \vvct{y}_*$. We solve these systems using the conjugate gradient method with $N_{it}$ iterations. As a preconditioner, we use a diagonal matrix computed by taking row sums of the absolute values of $A$ and $A_*$, and as an initial guess for $\vvct{\partial_t^ku}_{h}^0$ and $\vvct{\partial_t^ku}_{h*}^{N_T}$, we use $\vvct{\partial_t^ku|_{t=0}}$ and $P\vvct{\partial_t^ku}_{h}^{N_T}$, respectively.

The accuracy of the order-$p$ improving pre- and post-processing scheme using this iterative solver with different numbers of iterations $N_{it}$ is shown in Table \ref{tab:err2Dpit}, where $N_{it}=0$ means no pre- and post-processing is applied at all and $N_{it}=\infty$ means a direct solver was used. The table shows that $10$ iterations already reduces the error by an order of magnitude, $100$ iterations reduce the error by 2-3 orders of magnitude when using higher-degree elements, and $1000$ iterations reduce the error by 4-5 orders when using degree-3 elements on finer meshes.

\section{Conclusion}
\label{sec:conclusion}
We presented a new pre- and post-processing method to enhance the accuracy of the finite element approximation of linear wave problems. We proved that, by applying at most $q$ processing steps at only the initial and final time, the convergence rate of the finite element approximation can be improved from order $p+1$ to order $p+1+q$ in the $L^2$-norm, and from order $p$ to order $p+q$ in the energy norm, both up to a maximum of order $2p$. Each processing step corresponds to solving a linear system that can be solved efficiently with a direct solver or an iterative method and for the latter, good first guesses are provided by the unprocessed initial and final values. Numerical experiments showed that the pre- and post-processing steps can reduce the magnitude of the error by several orders, even when using an iterative method with only a small number of iterations. The experiments also confirmed that a convergence rate of order $2p$ in the energy norm is obtained, even when using unstructured meshes or meshes with sharp contrasts in element size and when solving wave propagation problems on domains with curved boundaries or strong heterogeneities.

\bibliographystyle{abbrv}
\bibliography{PPS}

\appendix
\section{Properties of the spatial operator}

\begin{lem}
\label{lem:L1}
The operators $\Lc^{-1}$ and $\Lh^{-1}$ satisfy the following symmetry properties: 
\begin{subequations}
\label{eq:L1}
\begin{align}
a(\Lc^{-1}u,v) &=  a(u,\Lc^{-1}v), &&u,v\in H^1_0(\Omega), \\
(\Lc^{-1}u,v)_{\rho}  &= (u,\Lc^{-1}v)_{\rho} , &&u,v\in L^2(\Omega), \\
a(\Lh^{-1}u,v) &=  a(u,\Lh^{-1}v), &&u,v\in U_h, \\
(\Lh^{-1}u,v)_{\rho} &= (u,\Lh^{-1}v)_{\rho}, &&u,v\in U_h.
\end{align}
\end{subequations}
\end{lem}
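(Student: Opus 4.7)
The plan is to derive all four identities directly from the defining properties of the inverse operators together with the symmetry of the bilinear form $a(\cdot,\cdot)$ and of the weighted inner product $(\cdot,\cdot)_{\rho}$. Both $a$ and $(\cdot,\cdot)_{\rho}$ are manifestly symmetric: $a(u,v) = \int_{\Omega} c\nabla u\cdot\nabla v\,dx = a(v,u)$ and $(u,v)_{\rho} = \int_{\Omega} \rho uv\,dx = (v,u)_{\rho}$. Once these two symmetries are in place, each of the four identities reduces to a one-line manipulation.

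For the first identity, I take $u,v\in H_0^1(\Omega)$. The defining relation of $\Lc^{-1}$ with $f=u$ and $w=v$ gives $a(\Lc^{-1}u,v) = (u,v)_{\rho}$, while with $f=v$ and $w=u$ it gives $a(\Lc^{-1}v,u) = (v,u)_{\rho} = (u,v)_{\rho}$. Combining these with the symmetry of $a$ yields $a(\Lc^{-1}u,v) = a(\Lc^{-1}v,u) = a(u,\Lc^{-1}v)$. For the second identity, take $u,v\in L^2(\Omega)$; then $\Lc^{-1}u,\Lc^{-1}v\in H_0^1(\Omega)$, and the defining relation with $w=\Lc^{-1}v$ (respectively $w=\Lc^{-1}u$) gives $a(\Lc^{-1}u,\Lc^{-1}v) = (u,\Lc^{-1}v)_{\rho}$ and $a(\Lc^{-1}v,\Lc^{-1}u) = (v,\Lc^{-1}u)_{\rho} = (\Lc^{-1}u,v)_{\rho}$. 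The symmetry of $a$ then equates the two left-hand sides and yields the result.

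The discrete versions follow by the identical argument with $\Lc^{-1}$ replaced by $\Lh^{-1}$, $H_0^1(\Omega)$ replaced by $U_h$, and the defining relation $a(\Lh^{-1}f,w) = (f,w)_{\rho}$ for all $w\in U_h$. Since $U_h\subset H_0^1(\Omega)$, the symmetry of $a$ restricts without issue. There is essentially no obstacle here; the only thing to be careful about is making sure, in the $L^2$ case, that we use $\Lc^{-1}u$ (which lies in $H_0^1(\Omega)$) as the test function in the weak formulation, so that the defining relation applies. I would therefore present the proof as two short computations followed by the remark that the discrete cases are verbatim analogues.
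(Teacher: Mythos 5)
Your proof is correct and follows essentially the same route as the paper: both derive each identity in one line from the defining relation $a(\Lc^{-1}f,w)=(f,w)_{\rho}$ together with the symmetry of $a$ and $(\cdot,\cdot)_{\rho}$, with the discrete cases handled verbatim. The only difference is that you spell out the symmetry steps that the paper leaves implicit.
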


\begin{proof}
Using the definition of $\Lc^{-1}$, we can derive the first two equalities as follows:
\begin{align*}
a(\Lc^{-1}u,v) &= (u,v)_{\rho} = a(u,\Lc^{-1}v), \\
(\Lc^{-1}u,v)_{\rho}  &= a(\Lc^{-1}u,\Lc^{-1}v) = (u,\Lc^{-1}v)_{\rho}.
\end{align*}
The results for $\Lh^{-1}$ can be derived in an analogous way.
\end{proof}

\section{Finite element approximation properties}

\begin{lem}
\label{lem:int}
Let $p\geq 1$ denote the degree of the finite element space and let $u\in H^k(\Omega)$ for some $k\geq 0$. Then
\begin{subequations} 
\label{eq:int1}
\begin{align}
\|u-\Pih u\|_{0} &\leq Ch^{\min(p+1,k)}\|u\|_{\min(p+1,k)} &&\text{if } k\geq 0\label{eq:int1a} \\ 
\|u-\Pih u\|_{1} &\leq Ch^{\min(p,k-1)}\|u\|_{\min(p+1,k)} &&\text{if } k\geq 1. \label{eq:int1b}
\end{align}
\end{subequations}
\end{lem}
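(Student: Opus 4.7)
The plan is to compare $\Pih u$ to a suitable quasi-interpolation operator $I_h: H^1_0(\Omega) \to U_h$ that preserves the homogeneous boundary condition and satisfies the standard local approximation estimates on every $e\in\mathcal{T}_h$:
\begin{align*}
\|u - I_h u\|_{0,e} &\leq Ch_e^{\min(p+1,k)}\|u\|_{\min(p+1,k),\tilde e}, \\
\|u - I_h u\|_{1,e} &\leq Ch_e^{\min(p,k-1)}\|u\|_{\min(p+1,k),\tilde e},
\end{align*}
where $\tilde e$ is a local patch of elements sharing a vertex with $e$. A Scott--Zhang operator serves this role; its construction uses only the local shape-regularity encoded in $\gamma$, so the constants depend only on $\gamma$ and $p$.

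For \eqref{eq:int1a}, I would exploit that $\Pih$ is the orthogonal projection in the weighted $L^2$ inner product and that $\rho$ is uniformly bounded above and below by \eqref{eq:bounds}. By best-approximation in the weighted norm and norm-equivalence with the unweighted $L^2$-norm, one gets
\begin{align*}
\|u - \Pih u\|_0 \leq C\|u - I_h u\|_0,
\end{align*}
after which summing the local $L^2$-interpolation estimates over $\mathcal{T}_h$ (using $h_e \leq h$ and finite overlap of the patches $\tilde e$) yields \eqref{eq:int1a}.

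The $H^1$ estimate \eqref{eq:int1b} is the more delicate part. My preferred route is to invoke the $H^1$-stability of the weighted $L^2$-projection, $\|\Pih v\|_1 \leq C\|v\|_1$ for all $v\in H^1_0(\Omega)$, which holds under the shape-regularity condition \eqref{eq:defmeshreg} (together with the standard mild local quasi-uniformity between neighbouring elements). With this stability, for any $v_h \in U_h$ we have
\begin{align*}
\|u-\Pih u\|_1 = \|(u-v_h) - \Pih(u-v_h)\|_1 \leq (1+C)\|u-v_h\|_1,
\end{align*}
and choosing $v_h = I_h u$ then summing the local $H^1$-interpolation bounds gives \eqref{eq:int1b}.

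The main obstacle is therefore establishing (or citing) the $H^1$-stability of $\Pih$ under the shape-regularity hypothesis. An alternative that sidesteps this result would be to split
\begin{align*}
\|u-\Pih u\|_1 \leq \|u-I_hu\|_1 + \|I_hu - \Pih u\|_1,
\end{align*}
apply an element-wise inverse estimate to $I_hu - \Pih u\in U_h$ to convert $\|\cdot\|_1$ into $h_e^{-1}\|\cdot\|_0$, and close using the already-established $L^2$ bound via the triangle inequality. This second route requires enough local compatibility between $h_e$ and $h$ to absorb the $h_e^{-1}$ factor cleanly into $h^{-1}$ before the $L^2$ bound contributes the $h^{\min(p+1,k)}$ power; either way, the shape-regularity encoded in $\gamma$ is what makes the leading constant uniform in $h$.
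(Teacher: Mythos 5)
Your treatment of \eqref{eq:int1a} coincides with the paper's: the paper cites the quasi-interpolation operator of \cite{bernardi89} (chosen specifically because its estimates are proved for curved elements, which the standard Scott--Zhang construction does not cover) and then invokes the best-approximation property of $\Pih$ in the weighted $L^2$-norm together with the uniform bounds \eqref{eq:bounds} to pass to the unweighted norm --- exactly your argument. For \eqref{eq:int1b} the paper takes what you present as your \emph{alternative} route: a triangle inequality through the quasi-interpolant followed by an inverse estimate converting the $H^1$-norm of the discrete difference $\Pi_h^{0}u-\Pih u$ into $h^{-1}$ times its $L^2$-norm, which is then controlled by \eqref{eq:int1a}. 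Your preferred route via the $H^1$-stability of the weighted $L^2$-projection would also close the argument, but it is strictly harder to justify: $H^1$-stability of $\Pih$ on general shape-regular, non-quasi-uniform meshes is a delicate result in its own right and would need to be cited or proved, whereas the inverse-inequality argument needs only the elementwise inverse estimate. You are, however, right to flag the $h_e^{-1}$ versus $h^{-1}$ issue in that second route: because $\Pih$ is a global projection there is no local, elementwise $L^2$ bound for $\Pi_h^{0}u-\Pih u$, so cleanly absorbing the inverse factor requires $h_{\min}\sim h$ or some comparable localisation; the paper's one-line proof passes over this point silently, so your caveat is, if anything, more careful than the source.
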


\begin{proof}
In \cite{bernardi89}, it is shown that these inequalities hold if we replace $\Pih$ by their quasi-interpolation operator $\Pi_h^{0}$, also when using curved elements. Inequality \eqref{eq:int1a} then follows from the fact that $\Pih$ minimises the approximation error in the weighted $L^2$-norm and inequality \eqref{eq:int1b} then follows from the inverse inequality. 
\end{proof}

\begin{rem}
The leading constants in Lemma \ref{lem:int} depend on the mesh regularity $\gamma$ as defined in \eqref{eq:defmeshreg}. For a sequence of meshes with curved elements, this regularity constant only remains bounded for an appropriate parameterisation of the curved elements, such as, for example, the parametrisation presented in \cite{lenoir86}.
\end{rem}

\begin{lem}
\label{lem:int2}
Assume regularity condition \eqref{eq:reg} holds for some $K\geq 0$ and let $u\in H^k(\Omega)$, with $k\geq 0$. Then, for all $\alpha\geq 0$, we have
\begin{subequations}
\label{eq:int2}
\begin{align}
\|\Lc^{-\alpha}(u-\Pih u)\|_0 &\leq Ch^{\min(p+1,k)+\min(p+1,2\alpha,K+2)}\|u\|_{\min(p+1,k)} &&\text{if }\alpha,k\geq 0,\label{eq:int2a} \\
\|\Lc^{-\alpha}(u-\Pih u)\|_1 &\leq Ch^{\min(p+1,k)+\min(p+1,2\alpha-1,K+2)}\|u\|_{\min(p+1,k)}  &&\text{if }\alpha+k\geq 1,\label{eq:int2b} 
\end{align}
\end{subequations}
where $p\geq 1$ denotes the degree of the finite element space.
\end{lem}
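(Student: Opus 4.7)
My plan is to prove \eqref{eq:int2} by an Aubin--Nitsche style duality argument. The case $\alpha=0$ is immediate from Lemma \ref{lem:int}, so I will assume $\alpha\geq 1$. For the $L^2$-bound \eqref{eq:int2a}, I will start from the representation
\begin{align*}
\|\Lc^{-\alpha}(u-\Pih u)\|_0 &= \sup_{v\in L^2(\Omega)\setminus\{0\}}\frac{(\Lc^{-\alpha}(u-\Pih u),v)_{\rho}}{\|v\|_0},
\end{align*}
and, iterating the $L^2$-self-adjointness statement in Lemma \ref{lem:L1}(b), rewrite the numerator as $(u-\Pih u,\Lc^{-\alpha}v)_{\rho}$. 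Since $\Pih$ is the weighted $L^2$-projection onto $U_h$, I can subtract $\Pih\Lc^{-\alpha}v$ for free, yielding $(u-\Pih u,\Lc^{-\alpha}v-\Pih\Lc^{-\alpha}v)_{\rho}$. Applying Cauchy--Schwarz and Lemma \ref{lem:int}\eqref{eq:int1a} to each factor then produces the product form on the right-hand side of \eqref{eq:int2a}. The $H^1$-bound \eqref{eq:int2b} is handled in the same spirit but starting from the coercivity-based characterisation $\|\Lc^{-\alpha}(u-\Pih u)\|_1 \leq C\sup_{v\in H^1_0\setminus\{0\}}\frac{a(\Lc^{-\alpha}(u-\Pih u),v)}{\|v\|_1}$, and then using the defining relation $a(\Lc^{-1}\cdot,v)=(\cdot,v)_{\rho}$ once to collapse one power of $\Lc^{-1}$, leaving $(u-\Pih u,\Lc^{-(\alpha-1)}v)_{\rho}$ (after another application of Lemma \ref{lem:L1}(b)).

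The second factor is then controlled by the regularity hypothesis \eqref{eq:reg}. By bootstrapping that hypothesis: starting from $v\in L^2$ and applying $\Lc^{-1}$ repeatedly, the smoothness of $\Lc^{-j}v$ grows by two derivatives per application, but saturates at $H^{K+2}(\Omega)$, giving $\|\Lc^{-\alpha}v\|_{s}\leq C\|v\|_0$ with $s=\min(2\alpha,K+2)$ for the $L^2$-case; starting from $v\in H^1_0$ and applying $\Lc^{-1}$ a total of $\alpha-1$ times gives $\|\Lc^{-(\alpha-1)}v\|_{s'}\leq C\|v\|_1$ with $s'=\min(2\alpha-1,K+2)$ for the $H^1$-case. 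Feeding these into Lemma \ref{lem:int}\eqref{eq:int1a} delivers the factors $h^{\min(p+1,2\alpha,K+2)}$ and $h^{\min(p+1,2\alpha-1,K+2)}$ respectively, which combined with the $h^{\min(p+1,k)}$ factor from $\|u-\Pih u\|_0$ produce exactly the exponents claimed.

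The routine parts are the symmetry manipulations (direct from Lemma \ref{lem:L1}) and the estimates of $\|\cdot\|_0$-type projection errors (direct from Lemma \ref{lem:int}). The main obstacle I expect is bookkeeping the three-way minimum in the exponents: one must verify that the regularity ceiling $K+2$, the polynomial-exactness ceiling $p+1$, and the intrinsic elliptic-regularity gain $2\alpha$ (or $2\alpha-1$) all interact correctly in the bootstrap, and that the boundary cases $\alpha=0$, $k=0$ reduce consistently to Lemma \ref{lem:int} and respect the standing hypotheses $\alpha,k\geq 0$ in \eqref{eq:int2a} and $\alpha+k\geq 1$ in \eqref{eq:int2b}. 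Once that bookkeeping is in place, the two estimates follow by the same two-line duality-plus-approximation scheme described above.
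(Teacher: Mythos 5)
Your argument is correct and is essentially the paper's proof in a cosmetically different wrapper: the paper bounds $\|\Lc^{-\alpha}(u-\Pih u)\|_0^2$ (resp.\ $a(\cdot,\cdot)$ of the error with itself) and divides through at the end, which is exactly your duality sup specialised to the test function $v=\Lc^{-\alpha}(u-\Pih u)$; both then move the powers of $\Lc^{-1}$ across by Lemma \ref{lem:L1}, insert $(I-\Pih)$ for free, and combine Lemma \ref{lem:int} with the bootstrapped regularity \eqref{eq:reg} saturating at $H^{K+2}$. The only nitpick is that your opening identity $\|w\|_0=\sup_v (w,v)_\rho/\|v\|_0$ is an equivalence up to the bounds $\rho_0,\rho_1$ rather than an equality, which is harmless here.
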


\begin{proof}
If $\alpha=0$, then the result follows immediately from Lemma \ref{lem:int}. Now, let $\alpha\geq 1$. Inequality \eqref{eq:int2a} immediately follows from the following:
\begin{align*}
\|\Lc^{-\alpha}(u-\Pih u)\|_0^2 &\leq C\big( \Lc^{-\alpha}(u-\Pih u),\Lc^{-\alpha}(u-\Pih u) \big)_{\rho} \\
&=  C\big( u-\Pih u, \Lc^{-2\alpha}(u-\Pih u) \big)_{\rho} \\
&=  C\big( u-\Pih u, (I-\Pih)\Lc^{-2\alpha}(u-\Pih u) \big)_{\rho} \\
&\leq  C\| u-\Pih u\|_0  \|(I-\Pih)\Lc^{-2\alpha}(u-\Pih u) \big\|_0 \\
&\leq  Ch^{\min(p+1,k)+\min(p+1,2\alpha,K+2)}\| u\|_{\min(p+1,k)} \|\Lc^{-\alpha}(u-\Pih u)\|_0,
\end{align*}
where $I$ denotes the identity operator and where the first line follows from the boundedness of $\rho$, the second line from Lemma \ref{lem:L1}, the third line from the definition of $\Pih$, the fourth line from the Cauchy--Schwarz inequality and the boundedness of $\rho$, and the last line from Lemma \ref{lem:int} and the regularity assumption. 

To prove \eqref{eq:int2b}, we start as follows:
\begin{align*}
\|\Lc^{-\alpha}(u-\Pih u)\|_1^2 &\leq Ca\big( \Lc^{-\alpha}(u-\Pih u),\Lc^{-\alpha}(u-\Pih u) \big) \\
&=  C\big( u-\Pih u, \Lc^{1-2\alpha}(u-\Pih u) \big)_{\rho},
\end{align*}
where the first line follows from the coercivity of $a$ and the second line from Lemma \ref{lem:L1} and the definition of $\Lc^{-1}$. For the rest of the proof, we can proceed in a way analogous to before.
\end{proof}

\begin{lem}
\label{lem:ell1}
Assume regularity condition \eqref{eq:reg} holds for some $K\geq 0$ and let $f\in H^k(\Omega)$, with $k\geq 0$. Then 
\begin{align}
\|(\Lc^{-1}-\Lh^{-1}\Pih) f\|_1 &\leq Ch^{\min(p,k+1,K+1)}\|f\|_{\min(p-1,k,K)},
\label{eq:ell1}
\end{align}
where $p\geq 1$ denotes the degree of the finite element space.
\end{lem}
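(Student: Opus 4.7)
The plan is to view $u_h := \Lh^{-1}\Pih f$ as the standard Galerkin approximation of $u := \Lc^{-1}f$, and then combine Galerkin quasi-optimality with the interpolation estimate of Lemma \ref{lem:int} and the elliptic regularity assumption \eqref{eq:reg}.

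First, I would establish Galerkin orthogonality. By definition of $\Lc^{-1}$ we have $a(u,w)=(f,w)_{\rho}$ for all $w\in H_0^1(\Omega)$, while by definition of $\Lh^{-1}$ and $\Pih$ we have $a(u_h,w)=(\Pih f,w)_{\rho}=(f,w)_{\rho}$ for all $w\in U_h$. Subtracting gives $a(u-u_h,w)=0$ for all $w\in U_h$. Using the coercivity of $a$ together with this orthogonality (i.e.\ standard C\'ea), one obtains
\begin{align*}
\|u-u_h\|_1 \leq C\|u-\Pih u\|_1,
\end{align*}
since $\Pih u \in U_h$ is an admissible competitor.

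Next, I would apply Lemma \ref{lem:int} with $m:=\min(K+2,k+2)$, which gives
\begin{align*}
\|u-\Pih u\|_1 \leq Ch^{\min(p,m-1)}\|u\|_{\min(p+1,m)}
= Ch^{\min(p,k+1,K+1)}\|u\|_{\min(p+1,k+2,K+2)},
\end{align*}
provided $u\in H^{\min(p+1,k+2,K+2)}(\Omega)$. This regularity, and the bound $\|u\|_{\min(p+1,k+2,K+2)}\leq C\|f\|_{\min(p-1,k,K)}$, follow from \eqref{eq:reg}: writing $s:=\min(p+1,k+2,K+2)$, we have $s-2\leq K$ and $s-2\leq k$, so the assumption gives $\|\Lc^{-1}f\|_{s}\leq C\|f\|_{s-2}$, and $s-2=\min(p-1,k,K)$ is exactly the Sobolev index on the right-hand side of the lemma. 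The indices $p\geq 1$ and $K,k\geq 0$ ensure $s\geq 2$ so this step is always valid.

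Chaining the three estimates produces the desired bound
\begin{align*}
\|(\Lc^{-1}-\Lh^{-1}\Pih)f\|_1
\leq Ch^{\min(p,k+1,K+1)}\|f\|_{\min(p-1,k,K)}.
\end{align*}
There is no real obstacle here; the argument is a textbook C\'ea--Nitsche sequence. The only mild subtlety is keeping careful track of the three independent truncations (polynomial degree $p$, data regularity $k$, and elliptic regularity $K$) so that the indices line up exactly as in the statement.
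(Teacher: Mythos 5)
Your proposal is correct and follows essentially the same route as the paper: identify $u_h=\Lh^{-1}\Pih f$ as the Galerkin approximation of $u=\Lc^{-1}f$, invoke C\'ea's lemma (which also uses the boundedness of $a$, not just coercivity), and combine Lemma \ref{lem:int} with the regularity assumption \eqref{eq:reg}; your index bookkeeping $s-2=\min(p-1,k,K)$ is exactly what the paper leaves implicit.
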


\begin{proof}
This is a standard result of finite element approximations for elliptic problems. Define $u:=\Lc^{-1}f$ and $u_h:=\Lh^{-1}\Pih f$. From the definitions of $\Lc^{-1}$, $\Lh^{-1}$, and $\Pi_h$, it follows that 
\begin{align*}
a(u,w)&=(f,w) &&\text{for all }w\in H_0^1(\Omega), \\
a(u_h,w)&=(f,w) &&\text{for all }w\in U_h,
\end{align*}
and from the regularity assumption that $\|u\|_{\min(k+2,K+2)} \leq C\|f\|_{\min(k,K)}$. The lemma then follows from the coercivity and boundedness of $a$, Cea's Lemma, and Lemma \ref{lem:int}.
\end{proof}

\begin{lem}
\label{lem:ell2}
Assume regularity condition \eqref{eq:reg} holds for some $K\geq 0$ and let $f\in H^k(\Omega)$, with $k\geq 0$. Then 
\begin{subequations}
\begin{align}
\|\Lc^{-\alpha}(\Lc^{-1}-\Lh^{-1}\Pih) f\|_1 &\leq Ch^{\min(p,k+1,K+1)+\min(p,2\alpha,K+1)}\|f\|_{\min(p-1,k,K)}, \label{eq:ell2a} \\
\|\Lc^{-\alpha}(\Lc^{-1}-\Lh^{-1}\Pih) f\|_0 &\leq Ch^{\min(p,k+1,K+1)+\min(p,2\alpha+1,K+1)}\|f\|_{\min(p-1,k,K)}. \label{eq:ell2b}
\end{align}
\end{subequations}
for all $\alpha\geq 0$, where $p\geq 1$ denotes the degree of the finite element space.
\end{lem}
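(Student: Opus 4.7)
The plan is to recognize $e:=(\Lc^{-1}-\Lh^{-1}\Pih)f$ as a standard Ritz-projection error and exploit it through a duality argument. Indeed, for any $w_h\in U_h$ we have $a(\Lh^{-1}\Pih f,w_h)=(\Pih f,w_h)_\rho=(f,w_h)_\rho=a(\Lc^{-1}f,w_h)$, so the Galerkin orthogonality $a(e,w_h)=0$ holds. Combined with the symmetries of $\Lc^{-1}$ in Lemma~\ref{lem:L1} and with Lemma~\ref{lem:ell1} as the base case, this will let each extra application of $\Lc^{-1}$ inside the norm produce one additional power of $h$.

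For \eqref{eq:ell2a}, I would start from $\|\Lc^{-\alpha}e\|_1 \leq C\sup_{0\neq w\in H^1_0}|a(\Lc^{-\alpha}e,w)|/\|w\|_1$ (coercivity plus continuity of $a$) and shift all the powers of $\Lc^{-1}$ onto $w$ via Lemma~\ref{lem:L1}, obtaining $a(\Lc^{-\alpha}e,w)=a(e,\Lc^{-\alpha}w)$. Writing $\Lc^{-\alpha}w=\Lc^{-1}(\Lc^{-(\alpha-1)}w)$, I recognise $v_h:=\Lh^{-1}\Pih\Lc^{-(\alpha-1)}w$ as the Ritz projection of $\Lc^{-\alpha}w$, and Galerkin orthogonality turns the expression into $a\bigl(e,(\Lc^{-1}-\Lh^{-1}\Pih)\Lc^{-(\alpha-1)}w\bigr)$. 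Applying Lemma~\ref{lem:ell1} to this factor with the regularity $\Lc^{-(\alpha-1)}w\in H^{\min(2\alpha-1,K+2)}$, obtained by iterating \eqref{eq:reg} starting from $w\in H^1_0$, produces the factor $h^{\min(p,2\alpha,K+1)}\|w\|_1$. Multiplying by Lemma~\ref{lem:ell1}'s bound $\|e\|_1\leq Ch^{\min(p,k+1,K+1)}\|f\|_{\min(p-1,k,K)}$ and dividing out $\|w\|_1$ then gives \eqref{eq:ell2a}.

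For \eqref{eq:ell2b}, the same strategy is run in $L^2$-duality: $\|\Lc^{-\alpha}e\|_0\leq C\sup_{0\neq g\in L^2}|(\Lc^{-\alpha}e,g)_\rho|/\|g\|_0$. Lemma~\ref{lem:L1} together with the defining property of $\Lc^{-1}$ rewrites $(\Lc^{-\alpha}e,g)_\rho=a(e,\Lc^{-(\alpha+1)}g)$. Galerkin orthogonality against the Ritz projection of $\Lc^{-(\alpha+1)}g$ leaves $a\bigl(e,(\Lc^{-1}-\Lh^{-1}\Pih)\Lc^{-\alpha}g\bigr)$, and Lemma~\ref{lem:ell1} with input regularity $\Lc^{-\alpha}g\in H^{\min(2\alpha,K+2)}$ supplies the improved factor $h^{\min(p,2\alpha+1,K+1)}\|g\|_0$. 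Combining again with Lemma~\ref{lem:ell1}'s bound on $\|e\|_1$ yields \eqref{eq:ell2b}.

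The main delicacy will be the regularity bookkeeping: one has to check that the iterated applications of \eqref{eq:reg} to $w$ and $g$ remain valid when the intermediate Sobolev index saturates at $K+2$, and that the auxiliary norm $\|\Lc^{-(\alpha-1)}w\|_{\min(p-1,k',K)}$ (respectively $\|\Lc^{-\alpha}g\|_{\min(p-1,k',K)}$) required by Lemma~\ref{lem:ell1} is genuinely controlled by $\|w\|_1$ (respectively $\|g\|_0$). With the natural choices $k'=\min(2\alpha-1,K+2)$ and $k'=\min(2\alpha,K+2)$, the outer expression $\min(p,k'+1,K+1)$ collapses to the claimed exponents $\min(p,2\alpha,K+1)$ and $\min(p,2\alpha+1,K+1)$ after the final minimum with $K+1$.
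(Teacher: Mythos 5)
Your argument is correct: the exponent bookkeeping checks out in both cases (for \eqref{eq:ell2a}, Lemma \ref{lem:ell1} applied to the dual datum $\Lc^{-(\alpha-1)}w\in H^{\min(2\alpha-1,K+2)}$ gives $h^{\min(p,2\alpha,K+1)}\|w\|_1$; for \eqref{eq:ell2b}, $\Lc^{-\alpha}g\in H^{\min(2\alpha,K+2)}$ gives $h^{\min(p,2\alpha+1,K+1)}\|g\|_0$), and the Galerkin orthogonality $a(e,w_h)=0$ you identify is exactly what the paper exploits. The underlying idea is the same Aubin--Nitsche-type duality, but the execution differs in two ways. First, the paper never introduces an external dual function: it bounds $\|\Lc^{-\alpha}e\|_1^2\leq Ca(\Lc^{-\alpha}e,\Lc^{-\alpha}e)=Ca(e,\Lc^{-2\alpha}e)$, inserts $(I-\Pih)$ by orthogonality, and divides out $\|\Lc^{-\alpha}e\|_1$ at the end, so the ``dual solution'' is $\Lc^{-2\alpha}e$ itself. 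Second, the paper only needs the weighted $L^2$-projection approximation estimate (Lemma \ref{lem:int}) on that dual factor, whereas you invoke the full Ritz/elliptic error estimate (Lemma \ref{lem:ell1}) on a genuine dual problem; your route therefore re-solves an auxiliary elliptic problem conceptually, while the paper's stays entirely within projection-error estimates. Both require the same iterated use of \eqref{eq:reg} with saturation at $K+2$, and both land on identical exponents, so the difference is one of packaging rather than substance; your version is the more classical textbook duality, the paper's is slightly leaner because the quantity being estimated reappears on the right and cancels.
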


\begin{proof}
Define $u:=\Lc^{-1}f$ and $u_h:=\Lh^{-1}\Pih f$. We first prove \eqref{eq:ell2a} by deriving
\begin{align*}
\|\Lc^{-\alpha}(u-u_h)\|_1^2 &\leq Ca\big(\Lc^{-\alpha}(u-u_h),\Lc^{-\alpha}(u-u_h)\big)  \\
&= Ca\big(u-u_h,\Lc^{-2\alpha}(u-u_h)\big)  \\
&= Ca\big(u-u_h,(I-\Pih)\Lc^{-2\alpha}(u-u_h)\big) \\
&\leq C \|u-u_h\|_1 \|(I-\Pih)\Lc^{-2\alpha}(u-u_h)\|_1 \\
&\leq Ch^{\min(p,k+1,K+1)+\min(p,2\alpha,K+1)}\|f\|_{\min(p-1,k,K)}\| \Lc^{-\alpha}(u-u_h) \|_1,
\end{align*}
where $I$ denotes the identity operator and where we used the coercivity of $a$ in the first line, Lemma \ref{lem:L1} in the second line, the definitions of $u$ and $u_h$ in the third line, the boundedness of $c$ and the Cauchy--Schwarz inequality in the fourth line, and Lemma \ref{lem:int} and the regularity assumption in the last line.

To prove \eqref{eq:ell2b}, we first derive
\begin{align*}
\|\Lc^{-\alpha}(u-u_h)\|_0^2 &\leq C\big(\Lc^{-\alpha}(u-u_h),\Lc^{-\alpha}(u-u_h)\big)_{\rho}  \\
&= Ca\big(u-u_h,\Lc^{-(2\alpha+1)}(u-u_h)\big) ,
\end{align*}
where we used the boundedness of $\rho$ in the first line and Lemma \ref{lem:L1} and the definition of $\Lc^{-1}$ in the second line. For the rest of the proof, we can proceed in a way analogous to before.
\end{proof}

\begin{cor}
\label{cor:ell3}
Assume regularity condition \eqref{eq:reg} holds for some $K\geq 0$ and let $f\in H^k(\Omega)$, with $k\geq 0$. Furthermore, let $\mathcal{S}$ be an operator of the form
\begin{align}
\label{eq:S1}
\mathcal{S}=\Lc^{-\beta_{n+1}}(\Lc^{-1}-\Lh^{-1}\Pih)\Lc^{-\beta_{n}}(\Lc^{-1}-\Lh^{-1}\Pih) \cdots \Lc^{-\beta_2}(\Lc^{-1}-\Lh^{-1}\Pih) \Lc^{-\beta_{1}},
\end{align}
with $n\geq 1$, $\beta_i\geq 0$ for $i=1,\dots,n+1$. Define $\alpha:=n+\beta_1+\beta_2+\cdots+\beta_{n+1}$. Then
\begin{subequations}
\begin{align}
\|\mathcal{S}f\|_1 &\leq  Ch^{\min({p,2\alpha+k-1,K+1})}\|f\|_k,  \label{eq:ell3a} \\
\|\mathcal{S}f\|_0 &\leq  Ch^{\min({p,2\alpha+k,K+1})}\|f\|_k,  \label{eq:ell3b}
\end{align}
\label{eq:ell3}%
\end{subequations}
where $p\geq 1$ denotes the degree of the finite element space.
\end{cor}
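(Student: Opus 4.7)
The plan is to iteratively apply Lemma~\ref{lem:ell2}, processing one factor $(\Lc^{-1} - \Lh^{-1}\Pih)$ at a time and tracking only the $H^1$ (and, at the final step, $L^2$) norm of the intermediate functions. I set $\Phi_0 := \Lc^{-\beta_1} f$ and, for $j = 1,\dots,n$,
\[
\Phi_j := \Lc^{-\beta_{j+1}}(\Lc^{-1} - \Lh^{-1}\Pih)\Phi_{j-1},
\]
so that $\mathcal{S}f = \Phi_n$. Iterating the regularity assumption \eqref{eq:reg} carefully yields $\|\Phi_0\|_{\min(p-1,\,k+2\beta_1,\,K)}\leq C\|f\|_k$, and every subsequent $\Phi_j$ automatically lies in $H_0^1(\Omega)$ because both $\Lc^{-1}$ and $\Lh^{-1}\Pih$ map into $H_0^1$.

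For the first step, applying Lemma~\ref{lem:ell2}a with $\alpha = \beta_2$ and input $\Phi_0$ (treated as a function in $H^{k+2\beta_1}$) yields $\|\Phi_1\|_1 \leq Ch^{E_1}\|f\|_k$ with $E_1 := \min(p, k+2\beta_1+1, K+1) + \min(p, 2\beta_2, K+1)$. For each subsequent $j \geq 2$, applying Lemma~\ref{lem:ell2}a with $\alpha = \beta_{j+1}$ and input $\Phi_{j-1}\in H^1$ gives $\|\Phi_j\|_1 \leq Ch^{E_j}\|\Phi_{j-1}\|_1$ with $E_j := \min(p, 2, K+1) + \min(p, 2\beta_{j+1}, K+1)$. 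Chaining these yields $\|\mathcal{S}f\|_1 \leq Ch^{E_1 + \cdots + E_n}\|f\|_k$.

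The exponent $E_1 + \cdots + E_n$ is a sum of $2n$ terms each of the form $\min(L, \cdot)$ with $L := \min(p, K+1)$. By the elementary subadditivity $\sum_i \min(L, a_i) \geq \min(L, \sum_i a_i)$, which is easily verified by checking cases, I obtain
\[
E_1 + \cdots + E_n \geq \min\Bigl(L,\; (k+2\beta_1+1) + 2\beta_2 + \sum_{j=2}^{n}(2 + 2\beta_{j+1})\Bigr) = \min(p, 2\alpha + k - 1, K+1),
\]
which is \eqref{eq:ell3a}. For \eqref{eq:ell3b}, I would use Lemma~\ref{lem:ell2}b instead of Lemma~\ref{lem:ell2}a in the final step, replacing $\min(p, 2\beta_{n+1}, K+1)$ by $\min(p, 2\beta_{n+1}+1, K+1)$ and thereby boosting the total exponent to $\min(p, 2\alpha + k, K+1)$.

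The main obstacle will be the careful bookkeeping of regularity at the first step. When $k + 2\beta_1 > K + 2$, the regularity assumption can no longer be iterated all the way to $H^{k+2\beta_1}$; nevertheless, only the weaker bound $\|\Phi_0\|_{\min(p-1,\,k+2\beta_1,\,K)}$ is actually required by Lemma~\ref{lem:ell2}, and this bound survives because the index never exceeds $K$, which is always within the range for iterated regularity. The $K+1$ cap in the target exponent absorbs exactly the loss incurred in this truncated regime, so the induction-free iteration gives the sharp estimate claimed.
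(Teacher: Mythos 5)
Your argument is correct and is essentially the paper's own proof: the paper disposes of this corollary with ``follows from Lemma \ref{lem:ell2} by induction on $n$,'' and your step-by-step peeling of the factors $(\Lc^{-1}-\Lh^{-1}\Pih)$ from the inside out, invoking Lemma \ref{lem:ell2} once per factor, is precisely that induction unrolled. The extra bookkeeping you supply (the subadditivity $\sum_i\min(L,a_i)\geq\min(L,\sum_i a_i)$, the accounting showing the exponents sum to $2\alpha+k-1$, and the observation that the $K$- and $(p-1)$-caps in Lemma \ref{lem:ell2} absorb the loss when the regularity assumption cannot be iterated past $H^{K+2}$) is all sound and only makes explicit what the paper leaves implicit.
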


\begin{proof}
The asserted follows from Lemma \ref{lem:ell2} and can be proven by induction on $n$.
\end{proof}

\begin{lem}
\label{lem:ell4}
Assume regularity condition \eqref{eq:reg} holds for some $K\geq 0$ and let $f\in H^k(\Omega)$, with $k\geq 0$. Furthermore, let $\mathcal{S}_1,\mathcal{S}_2$ be two operators of the form
\begin{align*}
\mathcal{S}=\Lc^{-\beta_{n+1}}(\Lh^{-1}\Pih)\Lc^{-\beta_{n}}(\Lh^{-1}\Pih) \cdots \Lc^{-\beta_2}(\Lh^{-1}\Pih) \Lc^{-\beta_{1}},
\end{align*}
with $n\geq 0$, $\beta_i\geq 0$ for $i=1,\dots,n+1$. Define $\alpha:=n+\beta_1+\beta_2+\cdots+\beta_{n+1}$. If $\alpha\geq 1$, then
\begin{subequations}
\begin{align}
\|(\mathcal{S}_1-\mathcal{S}_2)f\|_1 &\leq  Ch^{\min({p,2\alpha+k-1,K+1})}\|f\|_k,  \label{eq:ell4a} \\
\|(\mathcal{S}_1-\mathcal{S}_2)f\|_0 &\leq  Ch^{\min({p,2\alpha+k,K+1})}\|f\|_k,  \label{eq:ell4b}
\end{align}
\label{eq:ell4}%
\end{subequations}
where $p\geq 1$ denotes the degree of the finite element space.
\end{lem}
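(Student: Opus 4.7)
The plan is to expand each $\mathcal{S}_i$ using the identity $\Lh^{-1}\Pih = \Lc^{-1} - (\Lc^{-1}-\Lh^{-1}\Pih)$, isolate a common leading term $\Lc^{-\alpha}$ that cancels in the difference, and bound the remainder via Corollary \ref{cor:ell3}. The underlying intuition is that both $\mathcal{S}_1$ and $\mathcal{S}_2$ are ``discrete'' approximations of $\Lc^{-\alpha}$ in which each $\Lc^{-1}$ is replaced by $\Lh^{-1}\Pih$; their difference must therefore be a sum of products each containing at least one ``error factor'' $(\Lc^{-1}-\Lh^{-1}\Pih)$, which is exactly the setting of Corollary \ref{cor:ell3}.

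Concretely, substituting the identity above in each of the $n_i$ middle slots of $\mathcal{S}_i$ and distributing yields
\[
\mathcal{S}_i \;=\; \sum_{J \subseteq \{1,\ldots,n_i\}} (-1)^{|J|}\, \mathcal{S}_i^J,
\]
where $\mathcal{S}_i^J$ has the factor $(\Lc^{-1}-\Lh^{-1}\Pih)$ at the positions indexed by $J$ and $\Lc^{-1}$ at the remaining positions. For $J=\emptyset$, consecutive powers of $\Lc$ collapse to give $\mathcal{S}_i^\emptyset = \Lc^{-(n_i+\beta_1^{(i)}+\cdots+\beta_{n_i+1}^{(i)})} = \Lc^{-\alpha}$ for both $i=1,2$, so these terms cancel in $\mathcal{S}_1-\mathcal{S}_2$. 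For each nonempty $J$, merging adjacent $\Lc^{-1}$'s with the original $\Lc^{-\beta_j^{(i)}}$'s places $\mathcal{S}_i^J$ in precisely the form of the operator $\mathcal{S}$ in Corollary \ref{cor:ell3}, with $n := |J| \geq 1$ error factors separated by merged exponents $\beta'_j$.

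The key bookkeeping step is then to verify that the order parameter in the sense of Corollary \ref{cor:ell3}, namely $|J| + \sum_j \beta'_j$, equals $|J| + (n_i - |J|) + \sum_j \beta_j^{(i)} = \alpha$; this is the main (and essentially only) obstacle, since the exponents $\beta_j^{(i)}$ and the number $n_i$ may well differ between $\mathcal{S}_1$ and $\mathcal{S}_2$, but both give rise to the same total order $\alpha$. Once this identity is verified, Corollary \ref{cor:ell3} provides the bounds $Ch^{\min(p,2\alpha+k-1,K+1)}\|f\|_k$ on $\|\mathcal{S}_i^J f\|_1$ and $Ch^{\min(p,2\alpha+k,K+1)}\|f\|_k$ on $\|\mathcal{S}_i^J f\|_0$ for every nonempty $J$.

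Summing these bounds over the finitely many nonempty $J \subseteq \{1,\ldots,n_1\}$ and $J \subseteq \{1,\ldots,n_2\}$ and applying the triangle inequality to $\mathcal{S}_1-\mathcal{S}_2$ then yields the claimed estimates. The hypothesis $\alpha \geq 1$ plays essentially no role in the argument itself: if $n_1 = n_2 = 0$ (which is compatible with $\alpha \geq 1$) then $\mathcal{S}_1 = \mathcal{S}_2 = \Lc^{-\alpha}$ and the difference vanishes trivially; the condition is needed only so that the exponent $h^{\min(p,2\alpha+k-1,K+1)}$ on the right-hand side represents a genuine decay statement rather than an estimate on the identity operator.
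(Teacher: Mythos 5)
Your proposal is correct and follows essentially the same route as the paper: expand each occurrence of $\Lh^{-1}\Pih$ as $\Lc^{-1}-(\Lc^{-1}-\Lh^{-1}\Pih)$, observe that the only term with no error factor is $\Lc^{-\alpha}$ in both expansions and hence cancels in the difference, and bound every remaining term by Corollary \ref{cor:ell3} (your bookkeeping check that each such term has total order exactly $\alpha$ is the right and only point needing verification). The paper's proof is a terser version of exactly this argument, so no further comment is needed.
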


\begin{proof}
Replace the terms $\Lh^{-1}\Pih$ by $(\Lh^{-1}\Pih-\Lc^{-1})+\Lc^{-1}$ in both $\mathcal{S}_1$ and $\mathcal{S}_2$. The expansions of $\mathcal{S}_1$ and $\mathcal{S}_2$ then both consist of the term $\Lc^{-\alpha}$ and several terms of the form of \eqref{eq:S1}. The difference $\mathcal{S}_1-\mathcal{S}_2$ can then be written purely in terms of operators of the form of \eqref{eq:S1}. The Lemma then follows from the triangle inequality and Corollary \ref{cor:ell3}.
\end{proof}

\begin{rem}
The operators $\mathcal{S}_1$ and $\mathcal{S}_2$ are sequences of $\alpha$ operators, where each operator is either $\Lc^{-1}$ or $\Lh^{-1}\Pih$.
\end{rem}

\section{Properties of negative-order Sobolev norms}
\label{sec:nnorm}

\begin{lem}
\label{lem:nnorm}
Let $u\in L^2(\Omega)$ and assume $\rho\in\mathcal{C}^{k}$ and $c\in\mathcal{C}^{k-1}$ for some $k\geq 0$. Then
\begin{align}
\label{eq:nnorm}
\|u\|_{-k} &\leq C\|u\|_{-k*}
\end{align}
\end{lem}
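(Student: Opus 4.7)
The plan is to bound $(u,w)$ for arbitrary $w\in H_0^k(\Omega)$ by transferring powers of $\mathcal{L}^{-1}$ from $u$ onto $w$ through iterated integration by parts, so that $\|u\|_{-k}$ will be controlled by $\|\mathcal{L}^{-\alpha}u\|_0$ or $\|\mathcal{L}^{-\alpha}u\|_1$. The main auxiliary object I would introduce is the second-order linear operator $T w := \rho\mathcal{L}(\rho^{-1}w) = -\nabla\cdot(c\nabla(\rho^{-1}w))$. Under the regularity hypotheses $\rho\in\mathcal{C}^{k}$ and $c\in\mathcal{C}^{k-1}$, the Leibniz and chain rules show that $\|T^j w\|_{k-2j}\le C\|w\|_k$ for $2j\le k$, with $C$ depending on the pointwise bounds of $c$, $\rho$, and their derivatives up to the required orders.

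The first key step is the identity $(u,w)=(\mathcal{L}^{-1}u, Tw)$ for every $w\in H_0^1(\Omega)$. It is obtained by setting $v=\mathcal{L}^{-1}u\in H_0^1(\Omega)$, using $\mathcal{L} v = u$, and integrating by parts twice: both boundary contributions vanish because $v$ and $w$ both have zero trace.

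The main obstacle will be verifying the mapping property $T:H_0^k(\Omega)\to H_0^{k-2}(\Omega)$, which is precisely what enables the identity above to be iterated. Concretely, I need to show that if $D^\beta w|_{\partial\Omega}=0$ for all $|\beta|\le k-1$, then $D^\beta(Tw)|_{\partial\Omega}=0$ for all $|\beta|\le k-3$. Since $T$ is second order, the Leibniz rule writes each $D^\beta(Tw)$ as a linear combination of $D^\gamma w$ with $|\gamma|\le|\beta|+2\le k-1$, multiplied by bounded coefficients built from $c$, $\rho$, and their derivatives (whose existence is guaranteed by the smoothness hypotheses); all such $D^\gamma w$ vanish on $\partial\Omega$.

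Once the mapping property is in hand, I split on the parity of $k$. If $k=2\alpha$, applying the key identity $\alpha$ times yields $(u,w) = (\mathcal{L}^{-\alpha}u, T^\alpha w)$, and then Cauchy--Schwarz together with $\|T^\alpha w\|_0\le C\|w\|_k$ gives $|(u,w)|\le C\|u\|_{-k*}\|w\|_k$. If $k=2\alpha-1$, applying the identity $\alpha-1$ times yields $(u,w)=(\mathcal{L}^{-(\alpha-1)}u, T^{\alpha-1}w)$ with $T^{\alpha-1}w\in H_0^1(\Omega)$; one further integration by parts exposes $\nabla(\mathcal{L}^{-\alpha}u)$ and $\nabla(\rho^{-1}T^{\alpha-1}w)$, and Cauchy--Schwarz together with $\|T^{\alpha-1}w\|_1\le C\|w\|_k$ yields $|(u,w)|\le C\|\mathcal{L}^{-\alpha}u\|_1\|w\|_k=C\|u\|_{-k*}\|w\|_k$. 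Dividing by $\|w\|_k$ and taking the supremum over $w\in H_0^k(\Omega)\setminus\{0\}$ produces \eqref{eq:nnorm}.
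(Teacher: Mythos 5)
Your argument is correct and is essentially the paper's own proof: your operator $T=\rho\mathcal{L}(\rho^{-1}\cdot)$ and the iterated integration by parts are exactly the paper's step $(u,w)=(u,\rho^{-1}w)_{\rho}=(\mathcal{L}^{-\alpha}u,\mathcal{L}^{\alpha}\rho^{-1}w)_{\rho}$ (justified there via the symmetry Lemma \ref{lem:L1}), followed by the same parity split and the same use of the smoothness of $\rho$ and $c$ to bound $\|\mathcal{L}^{\alpha}(\rho^{-1}w)\|$ by $\|w\|_{k}$. Your explicit verification of the mapping property $T:H_0^k\to H_0^{k-2}$ fills in a detail the paper only asserts ($\mathcal{L}^{\kappa}(\rho^{-1}w)\in H_0^1(\Omega)$ for $\kappa\le\alpha-1$), which is a welcome addition rather than a deviation.
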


\begin{proof}
Let $w\in H_0^k(\Omega)$ and suppose $k=2\alpha$. From the regularity of $\rho$ and $c$, it follows that $\Lc^\kappa (\rho^{-1}w) \in H_0^1(\Omega)$, for $\kappa\leq\alpha-1$, and $\Lc^\alpha (\rho^{-1}w) \in L^2{\Omega}$ and therefore $\Lc^{-\alpha}\Lc^{\alpha}(\rho^{-1}w)=\rho^{-1}w$. We can then derive
\begin{align}
\label{eq:nnorm1a}
(u,w) = (u,\rho^{-1}w)_{\rho} = (\Lc^{-\alpha}u,\Lc^{\alpha}\rho^{-1}w)_{\rho} \leq C\|\Lc^{-\alpha}u\|_{0} \|w\|_{2\alpha},
\end{align}
where the first equality follows from the definition of $(\cdot,\cdot)_{\rho}$, the second equality follows from $\Lc^{-\alpha}\Lc^{\alpha}(\rho^{-1}w)=\rho^{-1}w$ and Lemma \ref{lem:L1}, and the final inequality follows from the Cauchy--Schwarz inequality and the regularity of $\rho$ and $c$. In case $k=2\alpha+1$, we can, in an analogous way, derive 
\begin{align}
\label{eq:nnorm1b}
(u,w) = (u,\rho^{-1}w)_{\rho} = a(\Lc^{-(\alpha+1)}u,\Lc^{\alpha}\rho^{-1}w) \leq C\|\Lc^{-(\alpha+1)}u\|_{1} \|w\|_{2\alpha+1}.
\end{align}
Inequality \eqref{eq:nnorm} then follows immediately from \eqref{eq:nnorm1a} and \eqref{eq:nnorm1b}.
\end{proof}

\end{document}